\documentclass[12pt]{article}

\usepackage{latexsym,amsfonts,amsmath,amssymb,epsfig,tabularx,amsthm,dsfont,mathrsfs}
\usepackage{graphicx}
\usepackage{hyperref}
\usepackage{enumerate}
\usepackage{color}

\allowdisplaybreaks

\theoremstyle{plain}

\newtheorem{thm}{Theorem}[section]
\newtheorem{rem}[thm]{Remark}
\newtheorem{alg}{Algorithm}
\newtheorem{lem}[thm]{Lemma}
\newtheorem{prop}[thm]{Proposition}
\newtheorem{cor}[thm]{Corollary}
\newtheorem{example}[thm]{Example}
\newtheorem{ass}[thm]{Assumption}
\newtheorem{defi}[thm]{Definition}

\renewcommand{\d}{{\rm d}}

\newcommand{\norm}[1]{\left\Vert #1 \right\Vert}

\newcommand{\N}{\mathbb{N}}
\newcommand{\E}{\mathbb{E}}
\newcommand{\R}{\mathbb{R}}

\newcommand{\cX}{\mathcal{X}}

\newcommand{\cZ}{\mathcal{Z}}
\newcommand{\cD}{\mathcal{D}}

\DeclareMathOperator{\eps}{\varepsilon}

\DeclareMathOperator{\KL}{\textnormal{KL}}
\DeclareMathOperator*{\argmin}{argmin}
\DeclareMathOperator*{\ecc}{\textnormal{{ecc}}}
\DeclareMathOperator*{\diff}{\textnormal{{diff}}}

\title{
Error bounds for simultaneous Wasserstein contractive adaptive increasingly rare MCMC 
}

\author{Julian Hofstadler\thanks{Department of Mathematical
		Sciences, University of Bath, BA2 7AY, Bath, UK.
		e-mail: \texttt{jh4272@bath.ac.uk}}, Daniel Rudolf\thanks{Chair of Mathematical Data Science, University of Passau, 94032 Passau, Germany. e-mail: \texttt{daniel.rudolf@uni-passau.de}}}
\date{\today}

\begin{document}
\maketitle

\begin{abstract}
We investigate adaptive increasingly rare Markov chain Monte Carlo algorithms and the associated time-average estimator for approximating expectations.
Under a simultaneous Wasserstein contraction assumption on the underlying family of Markov kernels we derive explicit bounds for the mean squared error. 
We illustrate the applicability of our estimate through adaptive stereographic algorithms and Metropolis-Hastings schemes that employ normalizing flows for adaptation. We also consider a generic adaptive algorithm for doubly intractable problems and provide a corresponding cost analysis to achieve a desired precision.
\end{abstract}

\noindent
\textbf{Keywords:} adaptive increasingly rare MCMC; Wasserstein contraction; doubly intractalbe distributions.

\noindent
\textbf{MSC 2020 classification:} 65C05, 65C20, 60J22.

\section{Introduction}
In statistics and numerical analysis a challenging task is the computation of integrals of the form
\begin{equation}
\label{eq: pih}
\pi(h) :=	\int_{\cX} h(x)\, \pi({\rm d}x),
\end{equation}
where $(\cX,\mathcal{B}(\cX))$ is a Borel space and $h\colon \cX\to \mathbb{R}$ a measurable $\pi$-integrable function.
Here $\pi$ may be a partially known probability measure with a density that contains an unknown normalization factor as it commonly appears in Bayesian statistics for posterior distributions. Additionally, we either impose or have a natural dependence of $\pi$ to an additional characterizing quantity $z^*\in \cZ$, with $(\cZ,\mathcal{F}_{\cZ})$ being an
auxiliary measurable space. This can or should be beneficially used to address the integration problem. We provide two examples to motivate the latter point of view:

\begin{example}[Normalizing flows]
	\label{ex: norm_flow}
Let $\cX\subseteq \mathbb{R}^d$ and 
$\cZ$ be a parametrized set of $C^1$-diffeomorphisms, i.e., for any $u\in \mathcal{U}\subseteq \mathbb{R}^s$ we have a $C^1$-diffeomorphism $z_u \colon \cX\to\cX$ in $\cZ$. 
Choose an auxiliary probability measure $\widetilde{\pi}$ on $(\cX,\mathcal{B}(\cX))$ such that realizations of $\widetilde{X}\sim \widetilde{\pi}$ can be drawn. Consider $z_u(\widetilde{X})\sim   
\pi_{z_u}:=\widetilde{\pi}\circ z_u^{-1}$ for $z_u\in \cZ$, where $\widetilde{\pi}\circ z_u^{-1}$ denotes the pushforward measure of $\widetilde{\pi}$ by $z_u$.  
The idea is to take advantage of the `best' proxy of $\pi$ through $\pi_{z_u}$s. To this end, define the Kullback-Leibler divergence
\[
	\KL(\pi,\pi_z) := \int_{\cX} \log\left(\frac{\d\pi}{\d \pi_z}(x)\right) \pi({\rm \d}x),
\]
where $\frac{\d\pi}{\d{\pi}_z}$ denotes the density of $\pi$ with respect to (w.r.t.) $\pi_z$. Then, let
\[
z^* := \argmin_{z\in\cZ} \KL(\pi,{\pi}_z)
\]   
be well defined (that is, there exists a unique minimizer). Then, we may use ${\pi}_{z^*}$ as proxy for $\pi$ to construct sampling schemes. However, usually we won't have access to $z^*$, such that it needs to be approximated, cf.~\cite{gabrie2022adaptive}. 	 
\end{example}

\begin{example}[Doubly-intractable distributions]
	\label{ex: doubly}
	Let $\bar{y} \in \cD$ be some measured data, with $(\cD , \mathcal{F}_\cD, \mu)$ being a probability space.
	Assume $\nu$ is a reference probability measure  on $(\cX,\mathcal{B}(\cX))$ serving as prior distribution.
	Consider $E\colon \cD \times \cX \to \mathbb{R}$ as an energy function that induces for $x\in\cX$ a likelihood mapping $y \mapsto \ell(y|x)$ of an energy based model as
	\[
		\ell(y|x) = \frac{\exp(-E(y,x))}{z^*(x)},
	\]
	with $z^*(x) := \int_{\cD} \exp(-E(y,x)) \, \mu({\rm d}y)$. Here $z^*\colon \cX \to (0,\infty)$ is interpreted as  non-evaluable (partition) function. Now the distribution of interest, denoted by $\pi=\pi_{z^*}$, takes the form
	\[
		\pi_{z^*}({\rm d}x) = \frac{\ell(\bar{y}| x) \nu({\rm d}x)}{\int_{\cX} \ell(\bar{y}| \widetilde{x}) \nu({\rm d} \widetilde{x})}
		=
		\frac{\exp(-E(\bar{y},x))}{z^*(x)\, C_{z^*}}\nu({\rm d}x),
	\] 
	with normalizing constant $C_{z^*} := \int_{\cX} \exp(-E(\bar{y},x))\, z^*(x)^{-1} \nu({\rm d}x)$. Here $C_{z^*}$ is unknown and evaluating the function $z^*$ infeasible, which renders the sampling problem w.r.t.~$\pi_{z^*}$ doubly-intractable, compare to \cite{murray2012mcmc}.
If $\cZ$ is a set of mappings $z\colon \cX \to (0,\infty)$ 
	we may approximate $z^*$ by $\widehat{z} \in \cZ$ aiming for $\pi_{\widehat{z}}$, where $\widehat{z}$ just substitutes $z^*$, that is `close' to $\pi_{z^*}$.	
\end{example}

These examples showcase two different conceptional scenarios. In Example~\ref{ex: norm_flow} the dependence of $\pi$ on $z^*$ is imposed and we typically have $\pi_{z^*} \not = \pi$. In contrast to that a natural dependence of $\pi$ on $z^*$ is present in Example~\ref{ex: doubly} and we have $\pi_{z^*} = \pi$.
In both settings it is reasonable to (try to) exploit this underlying  structure in the corresponding sampling problem.

The entire idea is not new and fits well in the classical framework of
adaptive Markov chain Monte Carlo (MCMC). For example, for adaptive Metropolis, cf. \cite{haario2001adaptive}, on the fly a parameter $z^*$, corresponding to the covariance matrix of $\pi$, is learned during simulation. In each iteration a refined proxy of $z^*$ is used to change the proposal in the Metropolis scheme improving the speed of convergence. After this, numerous extensions of adaptations in a Metropolis-Hastings framework, e.g.~in \cite{andrieu2006ergodicity,saksman2010ergodicity, fort2011convergence}, and other Markov chain schemes, e.g.~in \cite{roberts2009examples}, have been studied. 
Also different adaptation strategies, for example via preconditioning are feasible, we refer to \cite{hird2026non} for recent developments. 

We also consider adaptive MCMC, but add restrictions on the schedule of adaptation times where a refined proxy of $z^*$ is used to change the transition mechanisms. 
We follow the adpatively increasingly rare (AIR)
MCMC approach, coined and introduced in \cite{chimisov2018air} where
adaptation, as the name suggests, get less frequent over time. The corresponding AIR process\footnote{We postpone a precise definition to Section \ref{sec: alg}.}, say $(X_n,Z_n)_{n \in \N_0}$, then can be used to approximate $\pi_{z^*}$ by using $(X_n)_{n \in \N_0}$ while $(Z_n)_{n \in \N_0}$ serves as sequence of proxies for $z^*$. A useful observation is that between to adaptation times $(X_n)_{n \in \N_0}$ behaves like a Markov chain, which can be exploited in the theoretical analysis, see \cite{chimisov2018air,hofstadler2026convergenceRates}. Additionally, for carefully designed algorithms, it is still possible to adapt `enough' to significantly improve the sampling mechanism.  

In this work we are primarily concerned with theoretical guarantees of AIR algorithms. Namely, we consider the time-average estimator $\widehat{\pi}_n(h):= \frac{1}{n} \sum_{j=1}^n h(X_j)$ as proxy for $\pi(h)$, cf.~\eqref{eq: pih}, and the goal is to quantify the mean squared error of $\widehat{\pi}_n(h)$. Our main results, Theorem \ref{thm:mse_error_general} and Corollaries \ref{cor:bias_free_bnd_ecc} and \ref{cor:error_decomp_bounded_eccentricities}, show that under a simultaneous Wasserstein contraction assumption we have 
\[
\E \left[\left \vert \widehat{\pi}_n(h) - \pi_{z^*}(h)  \right\vert^2  \right] \leq \frac{C_h}{n} \left( 1 + B_n\right), 
\]
where $B_n$ is a bias term that only depends on our ability to approximate $\pi_{z^*}$ and the dependence on $h$ in $C_h$ is explicit. If all involved kernels are $\pi_{z^*}$-invariant, then $B_n=0$, see Corollary \ref{cor:bias_free_bnd_ecc}. If this is not the case, but still the bias term vanishes, then Corollary~\ref{cor:error_decomp_bounded_eccentricities} allows us to get 
refined
bounds for the mean squared error.
Furthermore, we apply our results to the following examples,
\begin{itemize}
\item[1)] adaptive stereographic MCMC, see Section~\ref{subsec:adaptive_stereo}, 
\item[2)] adaptive Metropolis via normalizing flows, see Section \ref{subsec:normalising_flow_AMH}, and
\item[3)] doubly intractable problems, see Section \ref{sec:doubly_intractable}.
\end{itemize}
For all examples we are able to deduce explicit error bounds for the mean squared error of $\widehat{\pi}_n(h)$ for suitably designed AIR algorithms. Additionally, for the doubly intractable setting we carry out a cost analysis based on our bounds. To the best of our knowledge this was not done before for this class of problems in the context of adaptive MCMC. 

Let us comment on how our results fit into the existing literature. AIR algorithms were introduced and studied in \cite{chimisov2018air} and later also in \cite{hofstadler2026convergenceRates,laitinen2024invitation}. Mean squared error bounds were obtained in \cite{chimisov2018air} for the case of simultaneous uniformly and $V$-uniformly ergodic algorithms, additionally assuming that $\pi_z = \pi$ for any $z \in \mathcal{Z}$. In the present work we consider a more general simultaneous Wasserstein contraction assumption (see Definition \ref{defi:uniform_Wass_contr}) and allow for $(\pi_z)_{z \in \mathcal{Z}}$ to vary. Similar conditions were used recently in \cite{hofstadler2026convergenceRates} to study almost sure convergence rates for AIR methods. 

Our analysis is based on a martingale decomposition technique that relies on solutions of Poisson's equation. This is a standard tool in the adaptive MCMC literature, see e.g.~\cite{andrieu2006ergodicity,saksman2010ergodicity,atchade2013bayesian,laitinen2024invitation,hofstadler2026convergenceRates}, particularly to study almost sure convergence and central limit theorems, see also \cite{atchade2010limit,atchade2012limit}. The present work is concerned with pre-asymptotic error bounds, which complement these existing results. 

It is worth mentioning that mean squared error bounds are well studied for classical, i.e.~non-adaptive, MCMC methods, we refer to \cite{Joulin_Ollivier,Ru12,latuszynski2015nonasymptotic,hofstadler2025optimal}. The paper \cite{Joulin_Ollivier} uses a Wasserstein contraction assumption, equivalently stated in terms of Ricci curvature there, to deduce such bounds. Our error estimates can also be considered as generalization of their framework to the adaptive setting.

The rest of this paper is organized as follows. In Section~\ref{sec:setting} we specify the setting. We provide  necessary background on Wasserstein distances, give a precise definition of the a generic AIR algorithm and collect auxiliary results. Section~\ref{sec:main_results} contains our main statements about the mean squared error and examples that illustrate the applicability of our contribution.
 The doubly intractable setting is investigated in Section \ref{sec:doubly_intractable}, where we establish error bounds and carry out a cost analysis. Finally, the appendix contains technical proofs.

\section{Preliminaries}\label{sec:setting}

Let $(\Omega,\mathcal{F},\mathbb{P})$ be a sufficiently rich probability space that serves as domain for all appearing random variables. 
Let $\cX$ be a Polish space and let $\mathcal{B}(\cX)$ be the corresponding Borel $\sigma$-algebra.
Let $(\mathcal{Z},\mathcal{F}_{\mathcal{Z}})$ be an auxiliary measurable space and
$\Pi \colon \mathcal{Z}\times \mathcal{B}(\cX)  \to [0,1]$ 
be a transition kernel that satisfies $\Pi(z,\cdot)=\pi_z$ for all $z\in\cZ$, where $(\pi_z)_{z\in\cZ}$ is a sequence of probability measures on $(\cX,\mathcal{B}(\cX))$.
By $\delta_{y}$ with $y$ being in a measurable space (e.g.~$\cX$, $\mathcal{Z}$ or $\mathcal{D}$) we denote the corresponding Dirac measure.

If $P\colon \cX \times \mathcal{B}(\cX) \to [0,1]$ is a generic Markov kernel, and $f\colon \cX \to \R$ we write
\[
Pf(x) = \int_\cX f(y) P(x, \d y),
\]
whenever the integral is well-defined. Similarly, we write $\mu(f) = \int_\cX f(x) \mu(\d x)$ if $\mu$ is a probability measure on $(\cX, \mathcal{B}(\cX))$ assuming the integral makes sense. 
Additionally, with $A \in \mathcal{B}(\cX)$, we use the standard notation
\[
\mu P(A) = \int_{\cX} P(x, A) \mu(\d x).
\]

\subsection{Adaptive increasingly rare MCMC}
\label{sec: alg}
The goal of the present paper is to study sequences $(X_n, Z_n)_{n \in \N_0}$ of $(\cX , \mathcal{Z})$-valued random variables corresponding to AIR MCMC schemes. 
For providing an algorithmic description that determines $(X_n)_{n\in\mathbb{N}_0}$ in terms of an auxiliary $\cZ$-valued sequence of random variables $(Z_n)_{n\in\mathbb{N}_0}$ we require
\begin{itemize}
		\item a probability measure $p_0$ on $(\cX,\mathcal{B}(\cX))$ that serves as initial distribution, i.e., $p_0(A) = \mathbb{P}(X_0\in A)$ for all $A\in\mathcal{B}(\cX)$;
	\item a transition kernel $Q_0 \colon \mathcal{X}\times \mathcal{F}_{\mathcal{Z}} \to [0,1]$  which specifies for $B\in\mathcal{F}_{\mathcal{Z}}$ and $x\in \mathcal{X}$ that
	\begin{equation}
	\label{eq: law_Z_0}
	Q_0(x,B) =  \mathbb{P}(Z_0\in B\mid X_0=x );
	\end{equation}
	\item a transition kernel $K\colon (\mathcal{Z}\times \cX) \times \mathcal{B}(\cX)  \to [0,1]$ which specifies for any $(z,x)\in\mathcal{Z}\times \cX$, $n\in\mathbb{N}_0$ and $A\in\mathcal{B}(\cX)$ that 
	\begin{equation} \label{eq: K}
			K(z,x,A) = \mathbb{P}(X_{n+1}\in A\mid X_n=x, Z_n=z);
	\end{equation}
	\item for any $n\in\N$ such that $n={j(j+1)}/{2}$ for some $j \in \N$ a transition kernel
	$
	Q_n \colon( \cX^{n +1}\times \mathcal{Z}^{n}) \times \mathcal{F}_{\mathcal{Z}} \to [0,1]
	$ 
	which specifies for any $(\widetilde{x},\widetilde{z})\in \cX^{n +1}\times \mathcal{Z}^{n}$
	and $B\in\mathcal{F}_{\cZ}$ that
	\[
		Q_n(\widetilde{x},\widetilde{z},B) = \mathbb{P}(Z_{n}\in B\mid (X_0,\dots,X_{n})=\widetilde{x}, (Z_0,\dots,Z_{n-1})=\widetilde{z} ).
	\]
\end{itemize}
Note that, the conditional distribution of $Z_0$ given $X_0$ is specified in \eqref{eq: law_Z_0} and the conditional distribution of $X_1$ given $X_0,Z_0$ in \eqref{eq: K}.
Now we are able to provide the transition mechanism, see Algorithm~\ref{alg:generic_scheme}, describing how to get $X_{n+1}$ given $X_0,\dots,X_n$ and $Z_0,\dots,Z_{n-1}$ for $n\in\mathbb{N}$. 
\begin{alg}
	\label{alg:generic_scheme}
	For $n\in\mathbb{N}$ given $(X_0,\dots,X_n)=\widetilde{x}\in\cX^{n+1}$ with $X_n=x_n\in\cX$ and $(Z_0,\dots,Z_{n-1})=\widetilde{z}\in\cZ^{n}$ we sample $X_{n+1}$ by performing:
	\begin{enumerate}
		\item \textbf{If} $n= j(j+1)/2$ for some $j\in \N$,
		
		\quad \textbf{then} sample $Z_{n}\sim Q_n(\widetilde{x}, \widetilde{z}, \cdot)$, call the result $z_{n}\in \mathcal{Z}$;
		
		\textbf{Else} set $z_{n} = z_{n-1}$, i.e., $Z_n\sim \delta_{z_{n-1}}$.
		\item Sample $X_{n+1}\sim K(z_{n},x_{n},\cdot)$.
	\end{enumerate}
\end{alg}

\begin{rem}
More generally, in \cite{chimisov2018air,hofstadler2026convergenceRates} adaptation times of the form $t_j^{(\beta)} = \sum_{i=1}^j \lceil i^\beta \rceil $, with $j\in \N$, have been considered. The results developed there indicate, theoretically and numerically, that $\beta \in [1,2]$ is particularly appealing. 
Motivated by this and for notational simplicity, in Algorithm~\ref{alg:generic_scheme} we have chosen $\beta=1$, leading to $t_j^{(1)} = j(j+1)/2$. 
However, also other choices of $(t_j)_{j\in\mathbb{N}}$, even randomized ones, as suggested in \cite{laitinen2024invitation}, may be eligible.
\end{rem}

\subsection{Wasserstein distance and contraction}

Let $\rho \colon \cX\times\cX \to [0,\infty)$ be a lower semi-continuous (w.r.t.~the product topology on $\cX$) metric. For probability measures $\mu_1,\mu_2$ on $(\cX,\mathcal{B}(\cX))$ define 
\[
W(\mu_1,\mu_2) := \inf_{\xi\in C(\mu_1,\mu_2)} \int_{\cX^2} \rho(x_1,x_2) \xi({\rm d}x_1,{\rm d}x_2), 
\]
where $C(\mu_1,\mu_2)$ is the set of couplings of $\mu_1,\mu_2$. We refer to it as \emph{Wasserstein distance}.
By the well-known Kantorovich-Rubinstein duality formula, cf.~\cite{villani2009optimal}, we have the following identity
\begin{equation}
	\label{eq: Kant_Rub_vs}
	W(\mu_1,\mu_2) = \sup_{\Vert h \Vert_\rho \leq 1} \left\vert \int_{\cX} h(x) \,\mu_1(\d x) - \int_{\cX} h(x) \,\mu_2(\d x)\right \vert,
\end{equation}
where $\norm{h}_\rho = \sup_{x,y \in \cX \, ; \, x \neq y} \frac{\vert h(x) - h(y) \vert }{\rho(x,y)}$ is the Lipschitz semi-norm of $h \colon \cX \to \R$ w.r.t.~the metric $\rho$.

For the rest of the section fix $z\in \cZ$. 
For $(x,A) \in (\cX,\mathcal{B}(\cX))$ set $K_z (x, A) := K(z,x, A)$. Note that $K_z$ is a transition kernel on $(\cX, \mathcal{B}(\cX))$ and 
recall that $\Pi(z,\cdot)=\pi_z$ is a probability measure on $(\cX,\mathcal{B}(\cX))$. 

For $k \in \mathbb{N}$ we call 
\[
\tau(K_z^k) := \sup_{x_1,x_2\in \cX\, ; \, x_1\not=x_2}\frac{W(K_z^k(x_1,\cdot),K^k_z(x_2,\cdot))}{\rho(x_1,x_2)}
\]
the
Wasserstein contraction coefficient of $K_z^{k}$, with $K_z^{k}$ being the product of $K_z$ with itself $k$-times.
By \cite[Proposition 14.3 and Proposition 14.4]{Do96} the contraction coefficient satisfies
\begin{itemize}
	\item a submultiplicativity property, i.e.~$\tau (K_{z_1} K_{z_2}) \leq \tau(K_{z_1}) \tau(K_{z_2})$ for any $z_1,z_2\in \mathcal{Z}$; and
	\item a contraction property, i.e.~$W(\mu_1 K^k_z,\mu_2 K^k_z) \leq \tau(K^k_z)\, W(\mu_1,\mu_2)$ for any $k\in\mathbb{N}$ and probability measures $\mu_1,\mu_2$ on $(\cX,\mathcal{B}(\cX))$. 
\end{itemize}

Slightly adopting the notation of \cite{hofstadler2026Poisson} we define the following. 

\begin{defi}\label{defi_Wasserstein_contractive_kernel}
The transition kernel $K_z$ is called \textit{Wasserstein contractive} if it admits $\pi_z$ as invariant distribution, i.e.~$\pi_z K_z = \pi_z$, satisfies $\tau(K_z^\ell) <1 $ for some $\ell \in \N$ and we have that $\tau(K_z) < \infty$.
\end{defi}

By submultiplicativity and contractivity we have the following:  
If $K_z$ is Wasserstein contractive, then for any probability distribution $\mu$ on $(\cX , \mathcal{B}(\cX))$ we have $W(\mu K_z^k, \pi_z) \le C \tau^k$ for a constant $C < \infty$.
 
\subsection{Poisson's equation} 

To prove our main results we intend to use a martingale approximation technique based on solutions of Poisson's equation. 
In this section we present the required background and preliminaries about Poisson's equation.

For the rest of this section let $z \in \mathcal{Z}$ be fixed. Given a $\pi_z$-integrable function $f \colon \cX \to \R$, referred to as \textit{forcing function}, we call 
\begin{equation}\label{equ:Poisson_equation}
u_z(x) - K_zu_z(x) = f(x) - \pi_z(f),
\end{equation}
the \textit{Poisson equation} (with forcing function $f$). If $u_z \colon \cX \to \R$ satisfies \eqref{equ:Poisson_equation}, then we say it solves Poisson's equation. 

In the spirit of \cite{Joulin_Ollivier} we define the \textit{eccentricity}, and \textit{coarse diffusion coefficient} of the probability space $(\cX,\mathcal{B}(\cX),\pi_z)$, respectively, as 
\[
	{\ecc}_z(x) :=\int_{\cX} \rho(x,x') \pi_z({\rm d} x'),
\]
and 
\[
	\diff(z,x):=\int_\cX \int_\cX \rho(x_1,x_2)^2 K_z(x,{\rm d}x_1) K_z(x,{\rm d}x_2).
\]

The following result guarantees existence and regularity of solutions to Poisson's equation for Wasserstein contractive $K_z$, see also \cite{hofstadler2026Poisson}. 

\begin{prop}\label{prop:solution_poisson_equation}
	Let $z\in\cZ$ and $f \colon \cX \to \mathbb{R}$ be $\pi_z$-integrable with $\Vert f \Vert_{\rho}< \infty$. Assume that there exists an $\widetilde{x}\in\cX$ such that ${\ecc}_{z}(\widetilde{x})<\infty$. If $K_z$ is a Wasserstein contractive kernel, then, the mapping 
	\[
	x\mapsto u_z(x) := \sum_{k=0}^\infty \left( (K^k_z f)(x) - \pi_z(f) \right), \quad x\in\cX,
	\]
is a well-defined solution of Poisson's equation w.r.t.~forcing function $f$, and for  $\ell = \min\{k \in \N \colon \tau(K_z^k) < 1\} < \infty$ satisfies
	\begin{enumerate}[(a)]
		\item  Lipschitz boundedness, that is, $\Vert u_z \Vert_{\rho} \leq\frac{\tau(K_z)^\ell \Vert f \Vert_{\rho} }{1-\tau(K_z^\ell)}$; and
		\item for any $x\in\cX$ that 
		$\vert u_z(x) \vert\leq  \frac{\Vert f \Vert_{\rho} \tau(K_z)^\ell }{1-\tau(K_z^\ell)}\, {\ecc}_z(x)$ and $u_z(x)\in\mathbb{R}$.
	\end{enumerate}	
\end{prop}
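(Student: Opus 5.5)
The plan is to control each summand of the series pointwise via the Kantorovich--Rubinstein duality \eqref{eq: Kant_Rub_vs}, and then sum a geometric-type bound. Write $\tau:=\tau(K_z)$, $\tau_\ell:=\tau(K_z^\ell)<1$ with $\ell$ as in the statement. Since $\pi_z K_z=\pi_z$, we have $\pi_z(f)=\pi_z K_z^k(f)$. For $k=q\ell+r$ with $0\le r<\ell$, submultiplicativity gives $\tau(K_z^k)\le \tau_\ell^{q}\tau^{r}$. The constant I aim for is stated with $\tau^\ell$ rather than $\max_{r<\ell}\tau^r$. This is harmless if $\tau\ge1$, so I will assume $\tau\ge 1$ where needed. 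If $\tau<1$ then $\ell=1$, and the crude bound $\tau(K_z^k)\le\tau^k$ already yields $\sum_k \tau^k = 1/(1-\tau)$. That differs from the claimed $\tau/(1-\tau)$ by the $k=0$ term, so a careful index shift is needed (see below).

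For (b), first establish finiteness of $\ecc_z$ everywhere: by the triangle inequality $\ecc_z(x)\le \rho(x,\widetilde x)+\ecc_z(\widetilde x)<\infty$. Then
\[
|K_z^kf(x)-\pi_z(f)| = \Bigl|\int (K_z^kf(x)-K_z^kf(x'))\,\pi_z(\d x')\Bigr| \le \|f\|_\rho\, \tau(K_z^k)\,\ecc_z(x).
\]
This uses duality with $\|f\|_\rho$, i.e.\ $|K_z^kf(x)-K_z^kf(x')|\le \|f\|_\rho W(K_z^k(x,\cdot),K_z^k(x',\cdot))\le \|f\|_\rho\tau(K_z^k)\rho(x,x')$. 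The integral is well-defined because $|f(y)|\le |f(\widetilde x)|+\|f\|_\rho\rho(y,\widetilde x)$, and $K_z^k$ moments of $\rho(\cdot,\widetilde x)$ are finite by $\tau<\infty$ and induction. Summing over $k$ gives absolute convergence, so $u_z(x)\in\R$ with
\[
|u_z(x)|\le \|f\|_\rho\,\ecc_z(x)\sum_{k\ge0}\tau(K_z^k).
\]
Then bound $\sum_k\tau(K_z^k)\le \sum_{r<\ell}\tau^r\sum_q\tau_\ell^q$ and compare it to $\tau^\ell/(1-\tau_\ell)$.

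Part (a) is identical, with differences $u_z(x)-u_z(y)=\sum_k (K_z^kf(x)-K_z^kf(y))$, each bounded by $\|f\|_\rho\tau(K_z^k)\rho(x,y)$.

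That $u_z$ solves Poisson's equation follows from dominated convergence. Applying $K_z$ termwise is justified because $K_z\ecc_z(x)\le \tau\,\ecc_z(x)+\ecc_z(x)$-type bounds hold: indeed $\int\ecc_z\,\d K_z(x,\cdot)=W$-type quantity $\le \tau \ecc_z(x)$ by contraction with $\pi_zK_z=\pi_z$. Then $u_z-K_zu_z$ telescopes to $f-\pi_z(f)$, since $K_z^kf(x)-\pi_z(f)\to0$.

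\textbf{Main obstacle.} The main obstacle is matching the exact constant $\tau^\ell/(1-\tau_\ell)$. Note that $\tau\ge 1$ would be needed to bound $\sum_{r<\ell}\tau^r$, and the crude estimate gives $\ell\max(1,\tau)^{\ell-1}$ rather than $\tau^\ell$. I would try to use the convention $\tau(K_z^0)$ and the sharper regrouping $\sum_{k}\tau(K_z^k)$ in blocks, together with $\tau\ge\tau(K_z^k)^{1/k}$-type arguments. If the constant does not match exactly, I would defer to the cited result in \cite{hofstadler2026Poisson} for the precise constant.
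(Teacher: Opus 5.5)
\textbf{Verdict.} Your argument is the right one, but it cannot reach the constant in the statement, because that constant is false. The paper gives no argument of its own; it only cites \cite{hofstadler2026Poisson}.

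\textbf{What your argument proves.} Your termwise bound $\vert K_z^kf(x)-\pi_z(f)\vert\le\Vert f\Vert_\rho\,\tau(K_z^k)\,{\ecc}_z(x)$ and its two-point analogue correctly give well-definedness, the Poisson equation, and
\[
\Vert u_z\Vert_\rho\le\Vert f\Vert_\rho\sum_{k\ge0}\tau(K_z^k)\le\frac{\sum_{r=0}^{\ell-1}\tau(K_z)^r}{1-\tau(K_z^\ell)}\,\Vert f\Vert_\rho,
\]
with the same factor in (b).

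\textbf{The stated constant is false.} The obstacle you flag cannot be removed by an index shift, by assuming $\tau(K_z)\ge1$, or by deferring to the citation: $\tau(K_z)^\ell/(1-\tau(K_z^\ell))$ does not bound $\Vert u_z\Vert_\rho/\Vert f\Vert_\rho$.
\begin{itemize}
\item Take $\cX=\{0,1\}$, $\rho(x,y)=\vert x-y\vert$, $\pi_z$ uniform, $K_z(x,\cdot)=\pi_z$ and $f(x)=x$. Every hypothesis holds with $\tau(K_z)=0$ and $\ell=1$. Here $u_z=f-\pi_z(f)$, so $\Vert u_z\Vert_\rho=1$ and $\vert u_z(x)\vert=1/2$, while (a) and (b) assert the bound $0$. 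The $k=0$ term is genuinely part of $u_z$.
\item The case $\tau(K_z)\ge1$ is not harmless either. On $\{a,b,c\}$ with the trivial metric, let $K_z(a,\cdot)=\delta_b$ and $K_z(b,\cdot)=K_z(c,\cdot)=\delta_c$. Then $\pi_z=\delta_c$, $\tau(K_z)=1$, $\tau(K_z^2)=0$ and $\ell=2$. For $f(a)=f(b)=1$, $f(c)=0$ one gets $u_z(a)=2$, $u_z(b)=1$, $u_z(c)=0$. Hence $\Vert u_z\Vert_\rho=2$ and $\vert u_z(a)\vert=2\,{\ecc}_z(a)$. This is twice what (a) and (b) allow, and exactly your constant $(1+\tau(K_z))/(1-\tau(K_z^2))=2$.
\end{itemize}
So stop at your constant. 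Since $\sum_{r<\ell}t^r\le\max\{2,t\}^\ell$ for all $t\ge0$, it gives the stated form with $\tau(K_z)$ replaced by $\max\{2,\tau(K_z)\}$. That is enough for the later uses, because $C$ in $\Gamma$ can always be taken to be at least $2$.

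\textbf{Two smaller repairs.}
\begin{itemize}
\item The claim $\int{\ecc}_z\,\d K_z(x,\cdot)\le\tau(K_z)\,{\ecc}_z(x)$ is false: in the first example the left side is $1/2$ and the right side is $0$. What holds is $K_z{\ecc}_z(x)\le\int\rho(y,x)\,K_z(x,\d y)+{\ecc}_z(x)$, together with $\int\rho(y,x)\,K_z(x,\d y)=W(\delta_xK_z,\delta_x)\le W(\delta_xK_z,\pi_zK_z)+W(\pi_z,\delta_x)\le(\tau(K_z)+1)\,{\ecc}_z(x)$. So $K_z{\ecc}_z\le(\tau(K_z)+2)\,{\ecc}_z$, which suffices to apply $K_z$ termwise. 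After that, $u_z-K_zu_z$ is just the $k=0$ term, with no limit needed.
\item Finiteness of $K_z^k\vert f\vert(x)$ does not follow from $\tau(K_z)<\infty$ and induction alone; you need invariance. Indeed $\int\rho(y,\widetilde x)\,K_z^k(x,\d y)=W(\delta_xK_z^k,\delta_{\widetilde x})\le\tau(K_z^k)\,{\ecc}_z(x)+{\ecc}_z(\widetilde x)<\infty$.
\end{itemize}
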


\begin{proof}
By \cite[Theorem 3.4]{hofstadler2026Poisson}, $u_z$ solves Poisson's equation w.r.t.~forcing function $f$ and satisfies property (a). Property (b) follows from the proof of \cite[Theorem 3.5]{hofstadler2026Poisson}, see in particular Equation (3) there. 
\end{proof}

\begin{rem}
	In the proofs of the next sections expectations of the form $ \E [{\ecc}_Z(X)]$ or $\E[ \int_\cX \rho(X, x') K(Z,X, \d x')]$, with $\mathcal{Z}$-valued random variable $Z$ and $\cX$-valued random variable $X$
	appear. 
	Let us emphasize that ${\ecc}_Z(X)$ and $\int_\cX \rho(X, x') K(Z, X, \d x')$ are indeed well defined (non-negative) random variables, compare for example~\cite[Appendix B]{hofstadler2026convergenceRates}.
\end{rem}

\subsection{Assumptions on the AIR algorithm} 
In this section we formulate our regularity condition on the AIR algorithm which we use to carry out our analysis later. Our assumptions are twofold. Firstly, we require a simultaneous Wasserstein contraction for $(K_z)_{ z \in \mathcal{Z}}$. Secondly, we assume that the metric $\rho$ used in $W$ is sufficiently regular when combined with $(X_n, Z_n)_{n \in \N_0}$.

\begin{defi}
\label{defi:uniform_Wass_contr}
We say that the family of kernels $(K_z)_{z \in \mathcal{Z}}$, used in Algorithm~\ref{alg:generic_scheme}, is \textit{simultaneous Wasserstein contractive} if there exist $C \in (0, \infty)$, $\tau\in [0,1)$ and $\ell \in \N$ such that 
	for every $z\in\cZ$ the invariant distribution of $K_z$ is $\pi_z$,
\[	
\tau(K_z) \leq C \qquad \text{and} \qquad \tau(K_z^\ell) \leq \tau.
\]
\end{defi} 

\begin{rem}
We adopt the convention that the number $\ell\in \N$ appearing in Definition \ref{defi:uniform_Wass_contr} is always the smallest such number. 
\end{rem}

\begin{rem}
Requiring the family $(K_z)_{z \in \mathcal{Z}}$ to be Wasserstein contractive with constants independent of $z\in \mathcal{Z}$ is certainly restrictive. However, without adding other assumptions, e.g.~on $(Z_n)_{n \in \N_0}$, weakening this 
condition
is not possible, cf.~\cite[Appendix C]{hofstadler2026convergenceRates}.  
Additionally, by appropriately choosing $\rho$, it was shown in \cite{hofstadler2026convergenceRates} that the \textit{simultaneous uniformly ergodic} and \textit{simultaneous $V$-uniformly ergodic} settings are covered, see also \cite{saksman2010ergodicity,chimisov2018air,bell2024adaptivestereographicmcmc,laitinen2024invitation} as well as Sections~\ref{subsec:first_examples} and \ref{sec:doubly_intractable} for examples.
\end{rem}

In addition to simultaneous Wasserstein contractivity, we also require some sort of regularity connecting the metric $\rho$ with the process $(X_n, Z_n)_{n \in \N_0}$. 

\begin{defi}\label{defi:metric_regularity}
Let $(X_n, Z_n)_{n \in \N_0}$ be a process corresponding to Algorithm~\ref{alg:generic_scheme}. We call $(X_n , Z_n)_{n \in \N_0}$ \textit{metric regular}, if 
\[
\sup_{j,k \in \N_0} \E\left[ {\ecc}_{Z_j}(X_k) \right] < \infty \qquad \text{and} \qquad \sup_{j,k \in \N_0} \E[{\diff}_{Z_j}(X_k)^2] < \infty.
\]
\end{defi}

\begin{rem} \label{rem:metric_bnd}
For any bounded metric $\rho$, that is, $\sup_{x_1,x_2 \in\mathcal{X}} \rho(x_1,x_2)<\infty$, the process $(X_n, Z_n)_{n \in \N_0}$ is metric regular by definition of the eccentricity and coarse diffusion coefficient.
\end{rem}

The following result provides a sufficient condition for $(X_n, Z_n)_{n \in \N_0}$ to be metric regular in terms of a simultaneous Lyapunov function. 

\begin{prop}\label{prop_uniform_bound_eccentr_and_diffusion}
	Let $V\colon\cX \to [1,\infty)$ and $\alpha\in [0,1)$, $L>0$ such that
	for any $(z,x)\in\mathcal{Z}\times \cX$ we have $K_z V(x)\leq \alpha V(x) + L$, i.e.~$V$ is a uniform Lyapunov function for $(K_z)_{z \in \mathcal{Z}}$. 
	Assume $\mathbb{E}[V(X_0)]<\infty$ and for $\widetilde{L}>0$ let
	\[
	\rho(x_1,x_2) \leq \widetilde{L} \left(V(x_1)^{1/2}+V(x_2)^{1/2}\right),
	\] 
	for any $x_1,x_2\in \cX$.
	Then, $(X_n , Z_n)_{n \in \N_0}$ is metric regular. 
\end{prop}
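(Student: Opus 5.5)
First I show that $\sup_{n\in\N_0}\E[V(X_n)]<\infty$. By Algorithm~\ref{alg:generic_scheme} and \eqref{eq: K}, conditionally on $(X_0,\dots,X_n,Z_0,\dots,Z_n)$ the variable $X_{n+1}$ has law $K(Z_n,X_n,\cdot)$. Together with the uniform Lyapunov condition this gives
\[
\E[V(X_{n+1})] = \E\left[K_{Z_n}V(X_n)\right] \leq \alpha\, \E[V(X_n)] + L .
\]
Iterating yields $\E[V(X_n)]\leq \alpha^n \E[V(X_0)] + L/(1-\alpha) \leq M := \E[V(X_0)] + L/(1-\alpha)$ for all $n$. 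I will also use Jensen: $\E[V(X_n)^{1/2}]\leq M^{1/2}$.

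Next comes the eccentricity. I first need a bound on $\pi_z(V^{1/2})$ that is uniform in $z$. Since $\pi_z K_z=\pi_z$, I would like to integrate the Lyapunov inequality against $\pi_z$ to get $\pi_z(V)\leq L/(1-\alpha)$. The obstacle is that $\pi_z(V)$ might be infinite a priori, in which case the step "$\pi_z(V)\le\alpha\pi_z(V)+L$ implies a bound" breaks down. The standard remedy is truncation. Since $K_z^n V\le \alpha^n V + L/(1-\alpha)$, for every $N$ we have
\[
\pi_z(V\wedge N) = \pi_z K_z^n (V\wedge N) \leq \int \min\{N,\ \alpha^n V(x)+L/(1-\alpha)\}\,\pi_z(\d x).
\]
Letting $n\to\infty$ with dominated convergence (the integrand is bounded by $N$ and $V$ is finite) and then $N\to\infty$ gives $\pi_z(V)\leq L/(1-\alpha)$. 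By Jensen, $\pi_z(V^{1/2})\le (L/(1-\alpha))^{1/2}$. Then
\[
{\ecc}_z(x) \leq \widetilde{L}\bigl(V(x)^{1/2} + \pi_z(V^{1/2})\bigr),
\]
so $\E[{\ecc}_{Z_j}(X_k)]\leq \widetilde L(M^{1/2}+(L/(1-\alpha))^{1/2})$, uniformly in $j,k$.

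Finally the diffusion coefficient, taking $\diff_z(x)=\diff(z,x)$. From $\rho(x_1,x_2)^2\le 2\widetilde L^2(V(x_1)+V(x_2))$ we get $\diff(z,x)\le 4\widetilde L^2 K_zV(x)\le 4\widetilde L^2(\alpha V(x)+L)$. The definition requires a bound on $\E[\diff_{Z_j}(X_k)^2]$, which would need second moments of $V$; these are not assumed. So I would reread the bound more sharply, using the $V^{1/2}$ structure. Write $\diff(z,x)=\int\!\!\int\rho^2\,K_z(x,\d x_1)K_z(x,\d x_2)$ and $\rho^2\le 2\widetilde L^2(V(x_1)+V(x_2))$. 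Then $\diff(z,x)^{1/2}$ is controlled by $(K_zV(x))^{1/2}$, so the square of $\diff$ involves $(K_zV)^2$ and again needs second moments. I therefore expect this to be the main obstacle. Most likely the intended condition is $\E[\diff_{Z_j}(X_k)]$ (or $\E[\diff^{1/2}]$-type quantities), or the norm in the definition is to be read accordingly. Under that reading the bound $\E[\diff(Z_j,X_k)]\le 4\widetilde L^2(\alpha M+L)$ closes the argument. If the square is genuinely intended, I would first check whether it only needs to hold with $\diff$ replaced by its square root, which yields $\E[\diff(Z_j,X_k)]$ again. Otherwise I would add the assumption that $V^2$ satisfies an analogous drift condition.

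The independence of $Z_j$ from the time $k$ is harmless throughout, because all bounds above hold pointwise in $z$.
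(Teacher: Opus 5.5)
Your argument is correct and is essentially the paper's: iterate the drift to bound $\sup_n\E[V(X_n)]$, use invariance to get $\pi_z(V)\le L/(1-\alpha)$ (the paper cites \cite[Proposition 4.24]{hairer2006ergodic}, where you give the truncation proof), and dominate $\rho$ via $V^{1/2}$. Your diagnosis of the definition is also right: the paper likewise only proves $\sup\E[\diff_{Z_j}(X_k)]<\infty$, and the squares in Definition~\ref{defi:metric_regularity} are evidently swapped, since Theorem~\ref{thm:mse_error_general} uses $\Lambda=\sup_{j,k}\E[\diff_{Z_j}(X_k)]$ and $\kappa=\sup_{j,k}\E[\ecc_{Z_j}(X_k)^2]$.

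Consequently, what is actually needed, and what the paper proves, is the \emph{second} moment of the eccentricity, not only the first moment you bounded. Your pointwise estimate gives it at once via $\ecc_z(x)^2\le 2\widetilde L^2\bigl(V(x)+\pi_z(V)\bigr)\le 2\widetilde L^2\bigl(V(x)+L/(1-\alpha)\bigr)$. Conversely, your pointwise use of $K_zV\le\alpha V+L$ for $\diff$ covers all pairs $(j,k)$, whereas the paper's identity $\E[K_{Z_k}V(X_k)]=\E[V(X_{k+1})]$ only treats $j=k$ explicitly.
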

\begin{proof} By \cite[Proposition 4.24]{hairer2006ergodic} for any $z \in \mathcal{Z}$ we have $\pi_z(V) \leq \frac{L}{1-\alpha}$. 
As a consequence of the Lyapunov function property 
	\begin{align}
		\notag
		\E [ V(X_k)] &= \E \left[\int_\cX V(x) K(Z_{k-1}, X_{k-1}, \d x) \right] \leq \E [\alpha V(X_{k-1}) + L] \\
		& \label{al: Lyap_est}
		\leq \dots  
		\leq \alpha^k\E [ V(X_0) ] + L\sum_{j=0}^{k} \alpha^j
		\leq \E [ V(X_0) ] + \frac{L}{1-\alpha}.
	\end{align}
	Using \eqref{al: Lyap_est} we obtain uniformly for any $k,\ell\in\N$ that
	\begin{align*}
		\mathbb{E} [ {\ecc}_{Z_\ell}(X_k)^2] & \leq \widetilde{L}^2\,\E \left[\int_{\cX} (V(X_k)^{1/2} + V(x)^{1/2})^2\pi_{Z_\ell}(\d x)\right]\\
		&\leq  2\widetilde{L}^2\, \E [ V(X_k) + \pi_{Z_\ell}(V)]
		\leq 2\widetilde{L}^2\,\E [ V(X_0) ] + \frac{4L\widetilde{L}^2}{1-\alpha} < \infty.
	\end{align*}
	Furthermore, by 
	\eqref{al: Lyap_est} and the imposed assumptions, for any $k\in\N$ we get
	\begin{align*}
		&\E [\diff(Z_k,X_k)]
		= \E \Big[ \int_{\cX} \int_{\cX} \rho(x_1,x_2)^2 K_{Z_k}(X_k,\d x_1) K_{Z_k}(X_k,\d x_2)\Big]\\
		& \leq \E \Big[ \int_{\cX} \int_{\cX} \widetilde{L}^2 (V(x_1)^{1/2}+V(x_2)^{1/2})^2 K_{Z_k}(X_k,\d x_1) K_{Z_k}(X_k,\d x_2)\Big]\\
		& \leq 2 \widetilde{L}^2 \E \Big[ \int_{\cX} \int_{\cX}  (V(x_1)+V(x_2)) K_{Z_k}(X_k,\d x_1) K_{Z_k}(X_k,\d x_2)\Big]\\
		& \leq 4 \widetilde{L}^2 \E [K_{Z_k} V(X_k)] = 4 \widetilde{L}^2 \E [V(X_{k+1})] \underset{\eqref{al: Lyap_est}}{<}\infty.
		\qedhere
	\end{align*}
\end{proof}

\section{Mean squared error bounds}\label{sec:main_results}

Now we formulate and prove our main result, which is a mean squared error bound for $\widehat{\pi}_n(h)$.
For this, throughout this section, let $(X_n, Z_n)_{n\in\mathbb{N}_0}$ be an AIR process as determined by Algorithm~\ref{alg:generic_scheme} specified by the transition kernels $K\colon (\mathcal{Z}\times \cX) \times \mathcal{B}(\cX)  \to [0,1]$ and $Q_n \colon( \cX^{n +1}\times \mathcal{Z}^{n}) \times \mathcal{F}_{\mathcal{Z}} \to [0,1]$ for suitable $n \in \N$, see Section~\ref{sec: alg}. 
Additionally, the transition kernel $\Pi\colon \cZ\times \mathcal{B}(\cX) \to [0,1]$ serves as our target distribution sequence $(\pi_z)_{z\in\cZ}$ by $\pi_z := \Pi(z,\cdot)$.

We rely on a simultaneous Wasserstein contractivity condition and assume that there is a single particular element $z^* \in \mathcal{Z}$, such that $\pi_{z^*}$ is the desired target distribution, i.e. $\pi_{z^*} = \pi$, with $\pi$ as in \eqref{eq: pih}, or at least $\pi_{z^*}$ being `close' to $\pi$.
Intuitively, $(Z_n)_{n \in \N_0}$ is designed to approximate $z^*$, however, no particular assumptions concerning that are imposed.

For fixed $n\in\mathbb{N}$ define the number
\begin{equation}
	\label{equ:m(n)_definition}
	m:=m(n):= \inf\left\{ k\in\mathbb{N}\colon \frac{k(k+1)}{2}\leq n < \frac{(k+1)(k+2)}{2} \right\}.
\end{equation}
Recall, that the adapatation times, i.e.~the update indices of $Z_n$ in Algorithm~\ref{alg:generic_scheme}, are $j(j+1)$ for $j \in \N$.
Thus, for any $n\in \N$ we have that $m(n)$ is the number of adaptations that were performed to compute $(X_k, Z_k)_{k=0}^n$.
We note that
	\begin{equation}
		\label{equ:m(n)_estimate}
	\sqrt{n/2} \leq m(n)\leq 2\sqrt{n}.
	\end{equation}

\subsection{Main results}\label{subsec:main_results}
 The mean squared error of the time-average estimator based on the AIR MCMC algorithm is stated in Theorem \ref{thm:mse_error_general}. For illustrative purposes additionally, we formulate two consequences, see Corollaries \ref{cor:bias_free_bnd_ecc} and \ref{cor:error_decomp_bounded_eccentricities}, and discuss these results. The proof of Theorem \ref{thm:mse_error_general} is presented in Appendix~\ref{app:proof_main_results}.

We use the following notation: 
For a simultaneously Wasserstein contractive family $(K_z)_{z \in \mathcal{Z}}$, set $\Gamma = \frac{C^\ell}{1-\tau} < \infty$, with $C,\tau$ and $\ell$ as in Definition~\ref{defi:uniform_Wass_contr}. 

\begin{thm}\label{thm:mse_error_general}
	Let $(X_n, Z_n)_{n\in\mathbb{N}_0}$  be determined by Algorithm~\ref{alg:generic_scheme} and $h\colon \cX \to \mathbb{R}$ be $\pi_{z}$-integrable for $z \in \mathcal{Z} \cup \{z^*\}$ with $\Vert h \Vert_\rho<\infty$. Assume $(K_z)_{z \in \mathcal{Z}}$ is simultaneous Wasserstein contractive and $(X_n, Z_n)_{n \in \N_0}$ is metric regular.
	Then, for $n\in\mathbb{N}$, with $m=m(n)$ as in \eqref{equ:m(n)_definition}, 
	we have
	\[
	\mathbb{E}\vert \widehat{\pi}_n(h) - \pi_{z^*}(h) \vert^2 
	\leq \frac{3\norm{h}_\rho^2}{n}\left( \Gamma^2\Lambda + 4\Gamma^2\kappa + B(m) \right) ,
	\]
	with $\Lambda := \sup_{j,k \in \N_0} \E [{\diff_{Z_j}(X_k)}] < \infty$ and $\kappa:=\sup_{j,k\in\mathbb{N}_0} \mathbb{E}({\ecc}_{Z_j}(X_k)^2) <\infty$ as well as $B(m) := \sum_{k=1}^{m}
	(k+1) \sup_{\Vert g \Vert_\rho\leq 1} \mathbb{E}\vert \pi_{Z_{t_k}}(g)-\pi_{z^*}(g) \vert^2$. 
\end{thm}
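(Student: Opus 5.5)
The plan is a Poisson-equation martingale decomposition. The error splits into three pieces, which explains the factor $3$ via $(a+b+c)^2\le 3(a^2+b^2+c^2)$. Write $t_k = k(k+1)/2$ for the adaptation times. Set $f=h$ and, for each $z\in\cZ$, let $u_z$ be the solution of Poisson's equation for $K_z$ with forcing function $h$ from Proposition~\ref{prop:solution_poisson_equation}. By simultaneous Wasserstein contraction, Proposition~\ref{prop:solution_poisson_equation}(a),(b) give uniformly in $z$
\[
\Vert u_z\Vert_\rho\le \Gamma\Vert h\Vert_\rho, \qquad |u_z(x)|\le \Gamma \Vert h\Vert_\rho\, {\ecc}_z(x),
\]
because $\tau(K_z)^\ell/(1-\tau(K_z^\ell))\le C^\ell/(1-\tau)=\Gamma$. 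Then
\[
\sum_{j=1}^n \big(h(X_j)-\pi_{z^*}(h)\big) = \sum_{j=1}^n \big(h(X_j)-\pi_{Z_{j-1}}(h)\big) + \sum_{j=1}^n \big(\pi_{Z_{j-1}}(h)-\pi_{z^*}(h)\big),
\]
and the first sum is rewritten with $h(X_j)-\pi_{Z_{j-1}}(h) = u_{Z_{j-1}}(X_j)-K_{Z_{j-1}}u_{Z_{j-1}}(X_j)$.

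\emph{Martingale part.} Rearrange the first sum as
\[
\sum_j \big[u_{Z_{j-1}}(X_j)-K_{Z_{j-1}}u_{Z_{j-1}}(X_{j-1})\big] + \text{boundary/switch terms}.
\]
Here $Z_{j-1}$ is measurable with respect to the past (it is drawn before $X_j$), so the increments $M_j=u_{Z_{j-1}}(X_j)-K_{Z_{j-1}}u_{Z_{j-1}}(X_{j-1})$ are martingale differences. Their second moments are bounded by
\[
\E|M_j|^2\le \tfrac12 \E\int\int |u(x_1)-u(x_2)|^2 K(X_{j-1},\d x_1)K(X_{j-1},\d x_2)\le \tfrac12\Gamma^2\Vert h\Vert_\rho^2\,\E\,{\diff}_{Z_{j-1}}(X_{j-1}).
\]
Orthogonality then gives a contribution of at most $n\Gamma^2\Lambda\Vert h\Vert_\rho^2$.

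\emph{Telescoping/boundary part.} Between adaptation times $Z$ is constant, so the telescoping sums collapse blockwise. Within each block $[t_k,t_{k+1})$, only the two endpoint terms $u_{Z_{t_k}}(X_{t_k})$ and $K_{Z_{t_k}}u_{Z_{t_k}}(X_{t_{k+1}})$ (or their analogues) survive. This leaves about $2m\lesssim 4\sqrt n$ terms, each bounded via (b) by $\Gamma\Vert h\Vert_\rho{\ecc}$. To control $K_z u_z(x)=u_z(x)-h(x)+\pi_z(h)$ I would use $|K_zu_z(x)|\le \Gamma\Vert h\Vert_\rho\, {\ecc}_z(x)$ as well; this follows by iterating the series from index $1$. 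Cauchy--Schwarz gives second moment $\le (2m)^2\Gamma^2\Vert h\Vert_\rho^2\kappa\le 4n\cdot\Gamma^2\Vert h\Vert_\rho^2\kappa\cdot(\text{const})$, and the target constant $4$ comes from $m^2\le 4n$ by \eqref{equ:m(n)_estimate}, up to bookkeeping of the factor $2$ per block. I expect this bookkeeping to be the main obstacle: the two endpoints must be paired so that each block contributes one term, or the count must come out as $m^2\le 4n$ exactly. Squared, this is $(\sum_{k\le m} a_k)^2\le m\sum a_k^2$. Metric regularity guarantees $\kappa,\Lambda<\infty$ (the finiteness of $\kappa$ as the sup of second moments is presumably what the metric-regularity hypothesis is meant to provide, as in Proposition~\ref{prop_uniform_bound_eccentr_and_diffusion}).

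\emph{Bias part.} Since $Z$ is constant on blocks, the bias sum equals
\[
\sum_{k\le m} L_k\big(\pi_{Z_{t_k}}(h)-\pi_{z^*}(h)\big)
\]
with block lengths $L_k\le k+1$, plus possibly a pre-adaptation block handled the same way. Cauchy--Schwarz with weights gives
\[
\Big(\sum_k L_k d_k\Big)^2\le \Big(\sum_k L_k\Big)\Big(\sum_k L_k d_k^2\Big)\le n\sum_k (k+1)\, d_k^2 .
\]
Combined with $|d_k|\le \Vert h\Vert_\rho\sup_{\Vert g\Vert_\rho\le1}|\pi_{Z_{t_k}}(g)-\pi_{z^*}(g)|$, this yields $n\Vert h\Vert_\rho^2B(m)$. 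Dividing the total by $n^2$ and using the factor $3$ gives the claim.
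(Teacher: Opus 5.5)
Your architecture is the paper's: a martingale part built from Poisson solutions, a blockwise telescoping remainder, and a bias part. The genuine gap is your indexing. You pair $h(X_j)$ with $Z_{j-1}$, the parameter that generated $X_j$, so your bias sum is $\sum_{i=0}^{n-1}(\pi_{Z_i}(h)-\pi_{z^*}(h))$, and it contains $\pi_{Z_0}(h)-\pi_{z^*}(h)$. But $Z_0\sim Q_0(X_0,\cdot)$ is not an adaptation, while $B(m)$ involves only $Z_{t_1},\dots,Z_{t_m}$. Your ``pre-adaptation block handled the same way'' contributes $n\,|\pi_{Z_0}(h)-\pi_{z^*}(h)|^2$ to the weighted Cauchy--Schwarz bound. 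This cannot be absorbed into $n\Vert h\Vert_\rho^2 B(m)$: if $\pi_{Z_{t_k}}=\pi_{z^*}$ for all $k\ge1$, then $B(m)=0$, yet $\pi_{Z_0}\neq\pi_{z^*}$ is allowed. As written, you therefore prove the bound only with $B(m)$ replaced by $B(m)+\sup_{\Vert g\Vert_\rho\le1}\E|\pi_{Z_0}(g)-\pi_{z^*}(g)|^2$.

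The fix is the paper's forward pairing. Write $h(X_j)-\pi_{Z_j}(h)=u_{Z_j}(X_j)-K_{Z_j}u_{Z_j}(X_j)$ and take increments $u_{Z_j}(X_{j+1})-K_{Z_j}u_{Z_j}(X_j)$. These are martingale differences for $\mathcal F_j=\sigma(X_0,\dots,X_j,Z_0,\dots,Z_j)$, since $Z_j$ is $\mathcal F_j$-measurable and $X_{j+1}$ given $\mathcal F_j$ has law $K_{Z_j}(X_j,\cdot)$. The bias then involves only $Z_1,\dots,Z_n$, i.e.\ $Z_{t_1},\dots,Z_{t_m}$, with block lengths at most $k+1$. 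The last block $[t_m,n]$ has length $n-t_m+1\le m+1$, so you get exactly $B(m)$. As a bonus, the remainder $u_{Z_j}(X_j)-u_{Z_j}(X_{j+1})$ telescopes to $m$ differences $u_{Z_{t_k}}(X_{t_k})-u_{Z_{t_k}}(X_{t_{k+1}})$, the last ending at $X_{n+1}$. So Proposition~\ref{prop:solution_poisson_equation}(b) applies directly, and you need no separate eccentricity bound for $K_zu_z$. Your conditional-variance estimate for the martingale increments, $\tfrac12\Gamma^2\Vert h\Vert_\rho^2\Lambda$ per step, is correct and slightly sharper than the paper's Jensen bound.

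On the constant you flagged as the main obstacle: that is not where the difficulty lies. Each of the $m$ squared block differences has expectation at most $2\Gamma^2\Vert h\Vert_\rho^2\cdot 2\kappa$. Since $m(m+1)/2\le n$ gives $m^2\le 2n$, the paper's own argument yields $\E A_n^2\le 4m^2\Gamma^2\Vert h\Vert_\rho^2\kappa\le 8n\,\Gamma^2\Vert h\Vert_\rho^2\kappa$. The paper's displayed $m^2\,2(\Gamma\Vert h\Vert_\rho)^2\kappa$ drops the factor $2$ that comes from summing two squared eccentricities. No pairing of endpoints will produce exactly the $4$, so the real issue to fix is the $Z_0$ bias term, not this bookkeeping.
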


In Theorem \ref{thm:mse_error_general} there appears a term $B(m)$, which is interpreted as \textit{bias term} and depends on the sequence $(Z_n)_{n \in \N_0}$ that is used to approximate $z^*$. We provide two further results, where, under additional assumptions, we are able to provide more explicit bounds. 
The first result covers the case where we have $\pi_z = \pi$ for any $z \in \mathcal{Z}$ 
with vanishing bias term.

\begin{cor}\label{cor:bias_free_bnd_ecc}
	Let the assumptions and the setting be as in Theorem~\ref{thm:mse_error_general}.
	Additionally, assume that $\pi_z= \pi_{z^*}$ for any $z\in\mathcal{Z}$ .
	Then, for any $n \in \N$, 
	\[
	\mathbb{E}\vert \widehat{\pi}_n(h) - \pi_{z^*}(h) \vert^2 
	\leq \frac{3\norm{h}_\rho^2 \Gamma^2}{n}\left( \Lambda + 4\kappa \right).
	\]	
\end{cor}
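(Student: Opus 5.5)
This follows directly from Theorem~\ref{thm:mse_error_general}; the only thing to show is that the bias term vanishes. All assumptions of the theorem are in force, so for every $n\in\N$ with $m=m(n)$ we have
\[
\mathbb{E}\vert \widehat{\pi}_n(h) - \pi_{z^*}(h) \vert^2
\leq \frac{3\norm{h}_\rho^2}{n}\left( \Gamma^2\Lambda + 4\Gamma^2\kappa + B(m) \right).
\]
We will prove that $B(m)=0$ for every $m$. Pulling $\Gamma^2$ out of the first two summands then gives exactly the bound stated in the corollary.

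To see that $B(m)=0$, fix $k\in\{1,\dots,m\}$ and recall that $t_k=k(k+1)/2$. The random variable $Z_{t_k}$ takes values in $\mathcal{Z}$, so the additional hypothesis $\pi_z=\pi_{z^*}$ for all $z\in\mathcal{Z}$ gives $\pi_{Z_{t_k}(\omega)}=\pi_{z^*}$ for every $\omega\in\Omega$. Hence, for every $g$ with $\Vert g\Vert_\rho\le 1$,
\[
\pi_{Z_{t_k}}(g)-\pi_{z^*}(g)=0 \quad\text{pointwise on } \Omega .
\]
Its squared expectation is therefore zero, and so is the supremum over all such $g$. Every summand $(k+1)\sup_{\Vert g\Vert_\rho\le1}\mathbb{E}\vert\cdots\vert^2$ in $B(m)$ thus vanishes, and $B(m)=0$.

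A small integrability point needs care. The quantity $\pi_{z^*}(g)$ must be finite for the differences above to make sense. This holds because $\Vert g\Vert_\rho\le 1$ gives $|g(x)|\le |g(\widetilde x)|+\rho(x,\widetilde x)$, and integrating against $\pi_{z^*}$ bounds $\pi_{z^*}(|g|)$ by $|g(\widetilde x)|+\ecc_{z^*}(\widetilde x)$. The eccentricity $\ecc_{z^*}(\widetilde x)=\ecc_z(\widetilde x)$ is finite for suitable $\widetilde x$ and any $z\in\mathcal{Z}$ by metric regularity, since $\mathbb{E}[\ecc_{Z_j}(X_k)]<\infty$. In any case $B(m)$ is defined within Theorem~\ref{thm:mse_error_general}, so the identity $\pi_{Z_{t_k}}=\pi_{z^*}$ is all that is needed.
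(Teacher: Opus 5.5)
Your proposal is correct and matches the paper's reasoning: the corollary follows from Theorem~\ref{thm:mse_error_general} once you note that $\pi_{Z_{t_k}}=\pi_{z^*}$ pointwise, so $B(m)=0$, and then factor out $\Gamma^2$. Your integrability remark is harmless but not needed beyond what the theorem already presupposes (finiteness of the eccentricity at one point extends to every point by the triangle inequality).
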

We note that the estimates of Corollary \ref{cor:bias_free_bnd_ecc} match the rate one would have for classical MCMC, see \cite{Joulin_Ollivier,Ru12, latuszynski2015nonasymptotic,hofstadler2025optimal}.

The second corollary keeps the bias term and offers bounds of it under suitable proxy assumptions.

\begin{cor}\label{cor:error_decomp_bounded_eccentricities}
	Let the assumptions and the setting be as in Theorem~\ref{thm:mse_error_general}.
	Additionally, assume $\sup_{\Vert g \Vert_\rho\leq 1} \mathbb{E}\vert \pi_{Z_{t_k}}(g)-\pi_{z^*}(g) \vert^2 \leq b/ (k+1)^{1+\alpha} $
	for an absolute constant $b<\infty$ and $\alpha\geq 0$.
	Then, for any $n\in \N$ with $n\geq3$,
	\[
	\mathbb{E}\vert \widehat{\pi}_n(h) - \pi_{z^*}(h) \vert^2  \leq
	\begin{cases}
		\begin{aligned}
&\frac{3\norm{h}_\rho^2C_1}{n}  
+ \frac{6\norm{h}_\rho^2b}{(1-\alpha)\,n^{\frac{1}{2} + \frac{\alpha}{2}}}
&\qquad &\alpha\in [0,1)\\ 
&\frac{3\norm{h}_\rho^2 C_1}{n} +	\frac{
	3
	\norm{h}_\rho^2b\log(3\sqrt{n})}{n} & &\alpha=1\\ 
&\frac{3\norm{h}_\rho^2 C_1}{n}	+ \frac{3\norm{h}_\rho^2b
	}{
	(1-\alpha)
	n} & &\alpha\in (1,\infty),	
\end{aligned}
\end{cases}
	\]
	where 
	$C_1 = \Gamma^2\Lambda + 4\Gamma^2\kappa$.
\end{cor}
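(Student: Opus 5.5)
This corollary follows directly from Theorem~\ref{thm:mse_error_general}. The only work is to bound the bias term $B(m)$ under the decay assumption and then convert $m=m(n)$ into a statement about $n$ via \eqref{equ:m(n)_estimate}. Inserting the assumed bound into the definition of $B(m)$ gives
\[
B(m) \le \sum_{k=1}^m (k+1)\frac{b}{(k+1)^{1+\alpha}} = b\sum_{k=1}^m (k+1)^{-\alpha} = b\sum_{j=2}^{m+1} j^{-\alpha}.
\]
Theorem~\ref{thm:mse_error_general} then yields
\[
\mathbb{E}\vert \widehat{\pi}_n(h) - \pi_{z^*}(h) \vert^2 \le \frac{3\norm{h}_\rho^2 C_1}{n} + \frac{3\norm{h}_\rho^2 B(m)}{n},
\]
so it remains to bound $\frac{1}{n}\sum_{j=2}^{m+1} j^{-\alpha}$ in each of the three regimes by comparing the sum with an integral.

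\emph{Case $\alpha\in[0,1)$.} Since $j\mapsto j^{-\alpha}$ is nonincreasing,
\[
\sum_{j=2}^{m+1} j^{-\alpha} \le \int_1^{m+1} x^{-\alpha}\,\d x \le \frac{(m+1)^{1-\alpha}}{1-\alpha}.
\]
From \eqref{equ:m(n)_estimate}, $m\le 2\sqrt n$. I would like $(m+1)^{1-\alpha}\le 2n^{(1-\alpha)/2}$, which would give $B(m)/n \le \frac{2b}{(1-\alpha)} n^{-1/2-\alpha/2}$, matching the stated constant $6 = 3\cdot 2$. A direct bound only gives $m+1\le 3\sqrt n$, and $3^{1-\alpha}$ may exceed $2$, so this step needs care. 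I would tighten the estimate of $m$: from $m(m+1)/2\le n$ one gets $m+1\le \sqrt{2n}+1$. For $n\ge 3$ this gives $(m+1)^2\le 2n+2\sqrt{2n}+1\le 4n$, since $2\sqrt{2n}+1\le 2n$ holds for $n\ge3$. Hence $m+1\le 2\sqrt n$ and $(m+1)^{1-\alpha}\le 2^{1-\alpha}n^{(1-\alpha)/2}\le 2n^{(1-\alpha)/2}$. This is where the hypothesis $n\ge 3$ is used.

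\emph{Case $\alpha=1$.} Here
\[
\sum_{j=2}^{m+1} j^{-1}\le \int_1^{m+1}x^{-1}\,\d x = \log(m+1)\le \log(3\sqrt n),
\]
using $m+1\le 3\sqrt n$ (or even $m+1\le 2\sqrt n$ from the previous case). This gives exactly the stated term $3\norm{h}_\rho^2 b\log(3\sqrt n)/n$.

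\emph{Case $\alpha>1$.} The sum is bounded independently of $m$:
\[
\sum_{j=2}^{\infty} j^{-\alpha}\le\int_1^\infty x^{-\alpha}\,\d x=\frac{1}{\alpha-1}.
\]
The displayed statement has $1-\alpha$ in the denominator, which is negative for $\alpha>1$; I would read this as $\alpha-1$, presumably a sign typo. With that reading the bound becomes $3\norm{h}_\rho^2 b/((\alpha-1)n)$.

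The main (mild) obstacle is tracking the constants in the first case so that the factor is exactly $6$. This is resolved by the sharpened bound $m+1\le 2\sqrt n$ for $n\ge3$ derived above.
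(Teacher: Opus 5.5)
Your proof is correct and follows the paper's route: you insert the decay assumption into $B(m)$ from Theorem~\ref{thm:mse_error_general}, compare $\sum_{k=1}^m (k+1)^{-\alpha}$ with an integral in each of the three regimes, and convert $m$ to $n$ via \eqref{equ:m(n)_estimate}. Two of your side remarks are right and are more careful than the paper's write-up. First, the paper's intermediate bound $B(m)\le \frac{3b}{1-\alpha}n^{\frac12-\frac\alpha2}$ would only yield the constant $9$, so your refinement $m+1\le 2\sqrt n$ is what actually gives the stated $6$ (keeping the $-1$ in $(m+1)^{1-\alpha}-1\le m^{1-\alpha}$ would also work). Second, for $\alpha>1$ the integral equals $b/(\alpha-1)$, so $1-\alpha$ in the statement is indeed a sign typo.
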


\begin{proof}
We begin with the first case, i.e.~$\alpha \in [0,1)$. Note that by definition of $B(m)$ and \eqref{equ:m(n)_estimate} we obtain 
\[
B(m) \leq \sum_{k=1}^m \frac{b}{(k+1)^\alpha} 
	 \leq \frac{b}{1-\alpha}\big[(m+1)^{1-\alpha}-1\big]
	 \leq \frac{3\,b}{1-\alpha} n^{\frac{1}{2}-\frac{\alpha}{2}}. 
\]
Combining this with the estimate from Theorem \ref{thm:mse_error_general} implies the claim. 
In the case $\alpha=1$, we have $B(m) \leq b\log(m+1)$, such that by \eqref{equ:m(n)_estimate} (and $n\geq3$) the conclusion of this part follows. In the final case $\alpha\in(1,\infty)$ we have
\[
B(m) \leq \sum_{k=1}^m \frac{b}{(k+1)^\alpha} \leq \int_0^\infty \frac{b}{(r+1)^{\alpha}} {\,\rm d}r = \frac{b}{1-\alpha}.
\]  
which also by \eqref{equ:m(n)_estimate} yields the bound. 
\end{proof}

\begin{rem}\label{rem:alternative_T_1_bound}
	By the Kantorovich-Rubinstein duality the bias part in Theorem \ref{thm:mse_error_general},
i.e.~$ \sup_{\Vert g \Vert_\rho\leq 1} \mathbb{E}\vert \pi_{Z_{t_\ell}}(g)-\pi_{z^*}(g) \vert^2$, can be expressed as
	\[
	\mathbb{E}\big[
	\sup_{\Vert g \Vert_\rho\leq 1}
	\vert \pi_{Z_{t_\ell}}(g)-\pi_{z^*}(g) \vert^2\big]
=
	\mathbb{E}
	\big[ W(\pi_{Z_{t_\ell}},\pi_{z^*})^2
	\big].
	\]
	Note that the Wasserstein expression in the latter expectation is indeed a measurable function, see e.g.~\cite[Theorem~1]{Zh00}. \end{rem}

\subsection{Application to classical adaptive MCMC}\label{subsec:first_examples}

Next we apply our results to two examples from the adaptive MCMC literature. 
Here `classical' refers to the fact that there is no bias term, i.e. $\pi_z=\pi$ for all $z\in\mathcal{Z}$, meaning each transition kernel $K_z$, with $z \in \mathcal{Z}$, has $\pi$ as its invariant distribution. Furthermore, the underlying metric $\rho$ is the trivial one, i.e.~we rely on what is usually coined as `uniform ergodicity' setting.
Adaptation is incorporated by updating, in a suitable sense, the underlying transition kernel.

\subsubsection{Adaptive stereographic MCMC}\label{subsec:adaptive_stereo}
We consider the stereographic MCMC approach, introduced in \cite{yang2022stereographicMCMC}, and further developed in \cite{bell2024adaptivestereographicmcmc} to the adaptive setting.

Let $\cX = \R^d$ and $\mathcal{B}(\cX)$ be the corresponding Borel sets.
Assume that the target distribution $\pi$ on $(\cX, \mathcal{B}(\cX))$ admits a strictly positive and continuous density $p \colon \cX \to (0, \infty)$. Furthermore, throughout this section, let \cite[Assumption 3.3]{bell2024adaptivestereographicmcmc} be satisfied, that is, we impose that
\[
\limsup_{\norm{x} \to \infty} \; p(x) \left( \norm{x}^2 + 1\right)^d < \infty,
\]
with $\norm{x}$ denoting the Euclidean norm of a vector $x \in \R^d$.

The core idea of the stereographic approach is to beneficially exploit the preconditioned stereographic projection. It bijectively transforms $\pi$ to the $d$-dimensional unit sphere $\mathbb{S}^{d}$. 
The geometric structure and compactness of $\mathbb{S}^d$ allow `efficient approximate sampling' w.r.t. the transformed target distribution there, followed by projecting the output back to $\mathbb{R}^d$.
Adaptivity is incorporated by updating the preconditioner and different algorithms on the sphere may be used, e.g.~Metropolis-Hastings, or slice samplers, cf.~\cite{bell2024adaptivestereographicmcmc}. 

Let $\rho(x,y)$ be the trivial metric, that is $\rho(x,y) = 1$ if $x \neq y$, and $0$ otherwise, with $x,y \in\R^d$.
It is well known that in this case $W$ coincides with the total variation distance (up to a multiplicaitive constant), check for instance \cite[Remark~2.1]{rudolf2018pertubation}.
In this example, the space $\mathcal{Z}$ is set to be a class of suitable positive definite matrices times a class of eligible mean vectors, that together determine a preconditioner, compare to \cite[Section 2]{bell2024adaptivestereographicmcmc}.
Here $\pi$ is assumed to be independent of $z\in\mathcal{Z}$, that is, $\pi = \pi_z$ for any $z \in \mathcal{Z}$, although on $\mathbb{S}^d$ different `transformed targets' are used.

\begin{prop}\label{prop:adaptive_stereographic_mse}
Let $(X_n, Z_n)_{n \in \N_0}$ be a sequence of random variables as specified in Algorithm \ref{alg:generic_scheme}, based either on an AIR stereographic random walk or 
on AIR stereographic spherical slice sampling, see \cite[Section 2]{bell2024adaptivestereographicmcmc}.
Then, there exists a constant $\widetilde{C} < \infty$, such that for any bounded measurable $h \colon \cX \to \R$ and $n \in \N$ we have 
\[
\E \left\vert \widehat{\pi}_n(h) - \pi(h)  \right\vert^2 \leq \frac{\widetilde{C} \norm{h}_{\infty}}{n},
\]
where $\norm{h}_{\infty} = \sup_{x \in \cX} \vert h (x) \vert < \infty$.
\end{prop}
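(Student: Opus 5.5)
The plan is to deduce the statement directly from Corollary~\ref{cor:bias_free_bnd_ecc}, applied with the trivial metric $\rho(x,y)=\mathds{1}_{\{x\neq y\}}$. Since $\pi_z=\pi$ for every $z\in\mathcal{Z}$ in the stereographic setting, we may take $z^*$ to be any element of $\mathcal{Z}$. The bias term in Theorem~\ref{thm:mse_error_general} then vanishes, and it suffices to verify the hypotheses of that corollary and to control the constants.

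\textbf{Step 1: metric quantities.} Since $\rho$ is bounded by $1$, Remark~\ref{rem:metric_bnd} gives metric regularity. Moreover ${\ecc}_z(x)\le 1$ and $\diff(z,x)\le 1$, so $\Lambda\le 1$ and $\kappa\le 1$. Any bounded measurable $h$ satisfies
\[
\norm{h}_\rho=\sup_{x\neq y}\vert h(x)-h(y)\vert\le 2\norm{h}_\infty ,
\]
and $h$ is $\pi$-integrable.

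\textbf{Step 2: simultaneous Wasserstein contraction.} With the trivial metric, $W$ equals the total variation distance, and $\tau(K_z^k)=\sup_{x,y}\norm{K_z^k(x,\cdot)-K_z^k(y,\cdot)}_{TV}\le 1$. Hence $\tau(K_z)\le 1=:C$ for free. It remains to find $\ell\in\N$ and $\tau<1$ with $\tau(K_z^\ell)\le\tau$ uniformly in $z$. I would obtain this from the simultaneous uniform ergodicity (equivalently, a simultaneous minorisation / Doeblin condition) established in \cite{bell2024adaptivestereographicmcmc} under their Assumption~3.3, for both the stereographic random walk and the stereographic spherical slice sampler. If the cited result is phrased as $\sup_{z,x}\norm{K_z^k(x,\cdot)-\pi}_{TV}\le M r^k$ with $r<1$, the triangle inequality gives
\[
\tau(K_z^k)\le 2Mr^k ,
\]
so choosing $\ell$ with $2Mr^\ell<1$ works. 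If the result is instead phrased as a uniform minorisation $K_z^{\ell}(x,\cdot)\ge \varepsilon\,\nu_z(\cdot)$, then the standard coupling argument yields $\tau(K_z^\ell)\le 1-\varepsilon$. Invariance $\pi K_z=\pi$ holds by construction.

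\textbf{Step 3: conclusion.} Corollary~\ref{cor:bias_free_bnd_ecc} now gives
\[
\E\vert\widehat{\pi}_n(h)-\pi(h)\vert^2\le \frac{3\cdot 4\norm{h}_\infty^2\,\Gamma^2\cdot 5}{n},
\qquad \Gamma=\frac{1}{1-\tau}.
\]
This is of the stated order $1/n$, but the dependence on $h$ is $\norm{h}_\infty^2$. The displayed bound has $\norm{h}_\infty$, which I read as a typo for the square (the scaling $h\mapsto ch$ forces the square). I would state the result with $\norm{h}_\infty^2$, or with $\widetilde C\norm{h}_\infty$ only for $\norm{h}_\infty\le 1$.

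\textbf{Main obstacle.} The only nontrivial input is the $z$-uniform contraction in Step~2. I expect to import it from \cite{bell2024adaptivestereographicmcmc}, after checking that the preconditioner class $\mathcal{Z}$ there (eigenvalues bounded away from $0$ and $\infty$, means in a compact set) matches the one used here.
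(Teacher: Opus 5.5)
Your proposal is correct and follows the paper's proof. Both use the trivial metric, get metric regularity from boundedness of $\rho$ (Remark~\ref{rem:metric_bnd}), deduce simultaneous Wasserstein contraction from the simultaneous uniform ergodicity in \cite{bell2024adaptivestereographicmcmc}, and conclude with Corollary~\ref{cor:bias_free_bnd_ecc}; the only difference is that the paper defers the conversion step to \cite[Section 4.1]{hofstadler2026convergenceRates}, while you write out the triangle-inequality/minorisation argument. You are also right that $\norm{h}_\infty$ in the statement should be $\norm{h}_\infty^2$: that is what Corollary~\ref{cor:bias_free_bnd_ecc} gives via $\norm{h}_\rho\le 2\norm{h}_\infty$, and it matches the analogous Proposition~\ref{thm:AIR_IMH_MSE_bound}.
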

\begin{proof}
By Lemmas 4.1 and 4.2 of \cite{bell2024adaptivestereographicmcmc} the corresponding families of kernels $(K_z)_{z \in \N_0}$ are simultaneous uniformly ergodic. Hence, by the same arguments 
as
in \cite[Section 4.1]{hofstadler2026convergenceRates}, $(K_z)_{z \in \mathcal{Z}}$ satisfies the simultaneous Wasserstein contraction. Boundedness of $\rho$ implies that $(X_n, Z_n)_{n \in \N_0}$ is metric regular. Thus, the claimed result follows by Corollary \ref{cor:bias_free_bnd_ecc}.
\end{proof}

\subsubsection{Adaptive Independent Metropolis with normalizing flows}\label{subsec:normalising_flow_AMH}

We investigate an AIR version of the independent Metropolis-Hastings (IMH) algorithm. 

As in \cite{brofos2022adaptation,gabrie2022adaptive} we adapt the proposal by the use of normalizing flows, 
however, also other strategies are feasible, see e.g.~\cite{parno2018transport}.

Let $\cX \subseteq \R^d$ be compact, $\mathcal{B}(\cX)$ be the corresponding Borel $\sigma$-algebra and $p \colon \cX \to (0, \infty)$ be a {strictly positive} and continuous probability density w.r.t.~the Lebesgue measure, which induces $\pi$. As in Example \ref{ex: norm_flow}, 
let $\mathcal{Z}$ be a set of normalizing flows, parametrized by $u \in \mathcal{U}$.
Let $\widetilde{\pi}$ be another distribution on $(\cX, \mathcal{B}(\cX))$, with density $\widetilde{p}$, such that sampling $\widetilde{X} \sim \widetilde{\pi}$ is feasible. 
Given $X_0= x_0, \dots, X_n=x_n$ within IMH one first proposes a candidate $\widetilde{X} \sim \widetilde{\pi}$, realized by $\widetilde{x}$, which is then accepted as $x_{n+1}$ with probability  
	\[
	a (x_n , \widetilde{x}) = \min\left\{ 1, \frac{p(\widetilde{x}) \, \widetilde{p}(x_n)}{p({x_n}) \,\widetilde{p}(\widetilde{x})} \right\},
	\]
	and
otherwise rejected, such that $x_{n+1} = x_n$.	

It is known from \cite{MeTw96,wang2022exact,brown2024exact} that the `efficiency' of IMH crucially depends on the ratio $w(x) = p(x)/ \widetilde{p}(x)$. 
	Indeed, let
$w^* = \sup_{x \in \cX} w(x) < \infty$, then, the total variation distance between the distribution of $X_n$ and $\pi$ is bounded by $2 (1- 1/w^*)^n$, see for example \cite[Theorem~2.1]{MeTw96},
which renders the IMH Markov chain 
$(X_n)_{n \in \N_0}$ to be uniformly ergodic. 
If $z_u$, with $u \in \mathcal{U}$, is a normalizing flow, then by the change-of-variables formula $z_u(\widetilde{X})$, with $\widetilde{X}\sim \widetilde{\pi}$, has density 
\[
p_{z_u}(x) = 
\widetilde{p}(z_u^{-1}(x)) \vert \det D z_u^{-1}(x) \vert,
\]
where $Dz_{u}^{-1}$ denotes the Jacobi matrix of $z_u^{-1}$.

Hence, for a suitably chosen flow $z_{u}$, proposing $z_{u}({\widetilde{X}})$ may improve the speed of convergence if $w_u^* < w^*$, with $w_{u}^* = \sup_{x \in \cX}p(x)/p_{z_{u}}(x)$.
Intuitively, if $\pi$ and $\pi_{z_u}$ are `close to each other', then $w_u^*$ should be `close to one'.
Thus, we may try to minimize the KL divergence 
over $z \in \mathcal{Z}$ (or $u \in \mathcal{U}$), that is, we try to find a flow $z_u$, in terms of $u\in\mathcal{U}$, that minimizes, 
\[
\KL(\pi,\pi_{z_u}) 
= \int_{\cX} \log\left(\frac{p(x)}{ p_{z_u}(x)}\right) \pi(\d x)
= 
C_\pi - \int_{\cX} \log p_{z_u}(x)\,  \pi(\d x),
\]
where $C_\pi = \int_{\cX} \log p(x) \pi(\d x)$ is a constant independent of $z_u$. 
Also other loss functions are possible; for details we refer to the different settings considered in  \cite{albergo2019flow,nicoli2021estimation,gabrie2022adaptive, grenioux2023transportMaps}.
The integral $\int_{\cX} \log p_{z_u}(x) \pi(\d x)$ is usually intractable, therefore, we rely on the idea to minimize the empirical proxy 
\[
J_{z_u}^{(n)}(Y_1, \dots, Y_n) = -\frac{1}{n} \sum_{j=1}^n \log p_{z_u}(Y_j),
\]
where $Y_1, \dots , Y_n$ is a finite sequence of random variables that approximately samples $\pi$.
Inspired by \cite{gabrie2022adaptive}, in the context of Algorithm \ref{alg:generic_scheme}, we use the already known $X_1, \dots, X_n$ as samples in $J^{(n)}_{z_u}$. This results in the following AIR scheme. To ease notation, we omit the dependence on $u$ in the flows $z_u$. 

\begin{alg}\label{alg:Normalizing_flow_AIR_independent_MH}
	For $n \in\N$	given $(X_0,\dots,X_n)= (x_0, \dots , x_n) \in \cX^{n+1}$ and $(Z_0,\dots,Z_{n-1})= (z_0 , \dots , z_{n-1}) \in \mathcal{Z}^n$ we sample $X_{n+1}$ by performing:
	\begin{enumerate}
		\item \textbf{If} $n = j(j+1)/2$ for some $j \in \N$,
		
		\quad \textbf{then} (approximately) compute\footnote{For simplicity, we assume that this minimum always exists.} $z_n= \min_{z \in \mathcal{Z}} J^{(n)}_z(x_1, \dots, x_n)$; 
		
		\textbf{Else} set $z_{n} = z_{n-1}$.
		\item Sample $\widetilde{X} \sim \widetilde{\pi}$, independent of everything else, with realization $\widetilde{x}$.
		\item Set $x_{n+1} = z_n(\widetilde{x})$ with probability 
		$a_{z_n}(x_n, \widetilde{x}) 
		= \min\left\{ 1, \frac{p(\widetilde{x}) \, {p_{z_n}}(x)}{p({x}) \,{p_{z_n}}(\widetilde{x})} \right\}$, 
		else set $x_{n+1} = x_n$.
	\end{enumerate}
\end{alg}

To analyze Algorithm \ref{alg:Normalizing_flow_AIR_independent_MH}, we impose the following condition.

\begin{ass}\label{ass:IMH_assumption}
Let the proposal density $\widetilde{p}\colon \mathcal{X} \to (0,\infty)$ be strictly positive and continuous. Assume that
	$\mathcal{Z}$ is smoothly parametrized by $\mathcal{U} \subseteq \R^s$, that is, for $z_u\in \mathcal{Z}$ we have that $(u,x) \mapsto z_u(x)$ is continuous.
	Additionally, there exists $\lambda >0$, such that for any $u \in \mathcal{U} $ and $x_0 \in \cX$ we have $\vert \det D_x (z_u)^{-1} (x_0) \vert > \lambda$.
\end{ass}
\begin{prop}\label{thm:AIR_IMH_MSE_bound}
Let $(X_n, Z_n)_{n\in\mathbb{N}_0}$ be determined by Algorithm~\ref{alg:Normalizing_flow_AIR_independent_MH}.
	If Assumption \ref{ass:IMH_assumption} is true, then there exists $\widetilde{C}  \in (0, \infty)$, such that for any bounded measurable $h \colon \cX \to \R$ and $n\in \N$ we have
	\[
	\E \left[ \vert \widehat{\pi}_n (h) - \pi(h)  \vert^2   \right] \leq \frac{\widetilde{C} \norm{h}_{\infty}^2}{n},
	\]
	where $\norm{h}_\infty = \sup_{x \in \cX} \vert h(x) \vert$.
\end{prop}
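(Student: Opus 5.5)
We will derive the claim from Corollary~\ref{cor:bias_free_bnd_ecc}, taking $\rho$ to be the trivial metric on $\cX$, as in the stereographic example. Then $\norm{h}_\rho \le 2\norm{h}_\infty$ for bounded $h$, and $\rho$ is bounded. By Remark~\ref{rem:metric_bnd}, $(X_n,Z_n)_{n\in\N_0}$ is therefore metric regular, with $\Lambda\le 1$ and $\kappa\le 1$. Every $h$ of this kind is also $\pi$-integrable.

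It remains to check simultaneous Wasserstein contractivity with $\pi_z=\pi$ for all $z\in\mathcal{Z}$. Invariance of $\pi$ holds for each fixed $z$, because $K_z$ is an independent Metropolis--Hastings kernel with proposal density $p_z:=p_{z_u}$ and target $p$. The Metropolis--Hastings acceptance ratio makes $K_z$ $\pi$-reversible, provided $p_z>0$ wherever $p>0$. That positivity follows from $\widetilde p>0$ together with the determinant bound in Assumption~\ref{ass:IMH_assumption}.

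The contraction itself should follow from a uniform Doeblin minorization. I will show that there is $\varepsilon>0$, independent of $u\in\mathcal{U}$, with
\[
\inf_{u\in\mathcal{U}}\inf_{x\in\cX} \frac{p_{z_u}(x)}{p(x)} \ge \varepsilon .
\]
Then $K_z(x,A)\ge \int_A \min\{p_z(y), p_z(x)p(y)/p(x)\}\,\d y$. Together with $\min\{p_z(y),\, p(y)\,p_z(x)/p(x)\}\ge \varepsilon p(y)$, this gives $K_z(x,\cdot)\ge \varepsilon\,\pi(\cdot)$ for all $x$ and all $z$.

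To obtain the bound, note $p_{z_u}(x)=\widetilde p(z_u^{-1}(x))\,|\det Dz_u^{-1}(x)|\ge \lambda \min_{\cX}\widetilde p$. The minimum of $\widetilde p$ is positive because $\widetilde p$ is continuous and strictly positive on the compact set $\cX$. Similarly $p\le \max_\cX p<\infty$. Hence we may take $\varepsilon=\lambda\min\widetilde p/\max p\in(0,1]$.

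Doeblin's condition then yields $\norm{K_z(x_1,\cdot)-K_z(x_2,\cdot)}_{TV}\le 1-\varepsilon$. In terms of $W$ with the trivial metric (total variation up to a constant), this means $\tau(K_z)\le 1-\varepsilon$. So Definition~\ref{defi:uniform_Wass_contr} holds with $\ell=1$, $C=1$ and $\tau=1-\varepsilon$. This is the same reduction used in \cite[Section 4.1]{hofstadler2026convergenceRates}, which passes from simultaneous uniform ergodicity to simultaneous Wasserstein contraction.

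Applying Corollary~\ref{cor:bias_free_bnd_ecc} with $\Gamma=1/\varepsilon$ then gives
\[
\E\left[\vert \widehat\pi_n(h)-\pi(h)\vert^2\right]\le \frac{12\norm{h}_\infty^2(\Lambda+4\kappa)}{\varepsilon^2 n}\le \frac{60\,\norm{h}_\infty^2}{\varepsilon^2 n},
\]
so $\widetilde C=60\varepsilon^{-2}$ works.

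The main obstacle is the uniform lower bound on $p_{z_u}$, since every other step is routine. Two points need care. First, the Jacobian assumption is stated for $z_u^{-1}$ at points $x_0\in\cX$, so I must make sure it is applied at $x$ itself and is not confused with a bound at $z_u^{-1}(x)$. Second, $z_u$ must map $\cX$ onto $\cX$, which Example~\ref{ex: norm_flow} guarantees, so that $\widetilde p(z_u^{-1}(x))$ is evaluated inside $\cX$, where the minimum of $\widetilde p$ controls it. The continuity in $u$ from Assumption~\ref{ass:IMH_assumption} is only needed for the measurability of the adapted kernels, not for the bound. I will also read the acceptance probability in Algorithm~\ref{alg:Normalizing_flow_AIR_independent_MH} as the standard IMH one, written with the proposal $z_n(\widetilde x)$ and the current state $x_n$.
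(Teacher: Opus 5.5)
Your proposal is correct and follows the same route as the paper. It uses the trivial metric and a uniform-in-$z$ bound on $p/p_z$ coming from compactness, continuity and the Jacobian bound. That gives simultaneous uniform ergodicity and hence simultaneous Wasserstein contraction; metric regularity follows from boundedness of $\rho$, and Corollary~\ref{cor:bias_free_bnd_ecc} finishes the argument. The only difference is that you verify the Doeblin minorization $K_z(x,\cdot)\ge\varepsilon\,\pi$ and the resulting bound $\tau(K_z)\le 1-\varepsilon$ (so $\ell=1$, $C=1$) by hand, rather than citing the Mengersen--Tweedie theorem and the reduction from the companion paper, which also gives the explicit constant $\widetilde{C}=60\,\varepsilon^{-2}$.
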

\begin{proof}
	The aim is to apply Corollary~\ref{cor:bias_free_bnd_ecc}, such that we check all corresponding requirements.
For $z\in\mathcal{Z}$, note that $K_z$ corresponds to an IMH transition kernel with proposal density $p_z$, that is, 
	\[
	K_z(x, A ) = \int_A a_z(x, \widetilde{x}) p_z( \widetilde{x}) \,\d \widetilde{x}+ \mathds{1}_{A}(x) \int_{\cX} \left( 1 -a_z(x,\widetilde{x})\right)  p_z(\widetilde{x}) \,\d \widetilde{x},
	\]
	where $a_z(x,\widetilde{x}) = \min\left\{ 1, \frac{p({\widetilde{x}}) \, {p_z}(x)}{p({x}) \,{p_z}({\widetilde{x}})} \right\}$.
	It is well-known that for any $z \in \mathcal{Z}$ the invariant distribution of $K_z$ is $\pi$.

	Due to the continuity of $p$ and the compactness of $\cX$, there exists some $c_1 \in (0, \infty)$ such that $\sup_{y \in \cX} p(y) = c_1$. 
	By Assumption \ref{ass:IMH_assumption}, there exists some $c_2 \in (0,\infty)$ such that  $ \inf_{y\in\cX}\widetilde{p}(z^{-1}(y)) \vert \det D_x z^{-1}(y) \vert \geq c_2 \lambda > 0$.
	Hence  
	\[
	w_z(y) = \frac{p(y)}{p_z(y)} \leq \frac{\sup_{y \in \cX} p(y)}{\inf_{y\in\cX}\widetilde{p}(z^{-1}(y)) \vert \det D_x z^{-1}(y) \vert} \leq \frac{c_1}{c_2\lambda}< \infty,
	\]
	uniformly for all $z \in \mathcal{Z}$ and $y \in \cX$. 
	From \cite[Theorem~1]{wang2022exact} or \cite[Theorem~2.1]{MeTw96} it follows that the family of kernels $(K_z)_{z \in \mathcal{Z}}$ is simultaneously (in $z\in\mathcal{Z}$) uniformly ergodic.
	Hence, considering the trivial metric\footnote{Recall that this means $\rho(x,y) = 1$ if $x \neq y$ and $0$ else.} within the Wasserstein distance, by the same arguments as in \cite[Section 4.1]{hofstadler2026convergenceRates} it follows, that the simultaneous Wasserstein contraction assumption is satisfied. 
	Moreover, by the fact that the trivial metric is bounded, we also have that $(X_n, Z_n)_{n \in \N_0}$ is metric regular, cf. Remark~\ref{rem:metric_bnd}. Thus, the claimed result follows by Corollary \ref{cor:bias_free_bnd_ecc}.
\end{proof}

\section{Doubly intractable problems}\label{sec:doubly_intractable}
In this section we study the doubly intractable setting of Example \ref{ex: doubly} for a generic simultaneously Wasserstein contractive family $(K_z)_{z \in \mathcal{Z}}$. To provide a fair assessment of the resulting estimates we also add a cost analysis.

We have a probability space $(\mathcal{D},\mathcal{F}_{\mathcal{D}},\mu)$, fixed data $\bar{y}\in\mathcal{D}$ and a reference (prior) distribution $\nu$ on the `parameter space' $(\cX,\mathcal{B}(\cX))$. 
A likelihood function $y\mapsto \ell(y|x)$ is determined by a mapping $E\colon \mathcal{D}\times \cX \to \mathbb{R}$ as
\[
\ell(y|x) = \frac{\exp(-E(y,x))}{z^*(x)},
\]
with $z^*(x) := \int_{\cD} \exp(-E(y,x)) \, \mu({\rm d}y) \in (0,\infty)$ being intractable in the sense of a non-evaluable function $x\mapsto z^*(x)$. The mapping $y\mapsto\ell(y|x)$ fits into the framework of energy based models, cf.~\cite{lecun2006tutorial}. 

There is a rich literature on learning of the parameters of such models by optimization methods such as contrastive divergence, maximum pseudo-likelihood or score matching, see e.g.~\cite{Nijkamp,song2021train}. Assessing uncertainty requires sampling of a posterior distribution $\pi=\pi_{z^*}$ given by
\[
\pi_{z^*}({\rm d}x) = \frac{\ell(\bar{y}| x) \nu({\rm d}x)}{\int_{\cX} \ell(\bar{y}| \widetilde{x}) \nu({\rm d} \widetilde{x})}
=
\frac{\exp(-E(\bar{y},x))\nu({\rm d}x)}{z^*(x)\, C_{z^*}},
\] 
with unknown normalizing constant $C_{z^*} := \int_{\cX} \exp(-E(\bar{y},x))\, z^*(x)^{-1} \nu({\rm d}x)$. Additionally, $x\mapsto z^*(x)$ cannot be evaluated, such that the sampling problem is doubly-intractable.
Examples occur e.g.~in statistical mechanics \cite{habeck2014bayesian}, 
molecular dynamics
\cite{eltzner2023Bayesian}, or exponential random graph models \cite{hunter2006inference}.

To employ classical MCMC algorithms, evaluating $x\mapsto \ell(\bar{y}|x)$, which requires $x\mapsto z^*(x)$, is essential, yet not possible in the present setting. To deal with this issue one may use, for example, augmented state space methods, as considered here \cite{moller2006efficient,murray2012mcmc}, noisy MCMC, cf. \cite{alquier2016noisy,habeck2020stability}, or adaptive MCMC as in \cite{atchade2013bayesian,liang2016adaptive}, see also the survey \cite{park2018bayesian}. 
We consider an adaptive MCMC approach approximating $x\mapsto z^*(x)$, cf.~\cite{habeck2014bayesian,eltzner2023Bayesian}. To define $\mathcal{Z}$ we impose a standing assumption.\\[-1ex]

\noindent
\textbf{Standing assumption.} 
Let $\rho\colon \cX^2 \to [0,\infty)$ be the metric on $\cX$ that renders $(\cX,\rho)$ a Polish space. For $E\colon \mathcal{D}\times \cX \to \mathbb{R}$ we assume for $y\in\mathcal{D}$ that the mapping $x\mapsto\exp(-E(y,x))$ satisfies the following continuity condition: For all $\varepsilon>0$ and for all $x,x'\in\cX$ there exists a $\delta>0$ such that
\[
\rho(x,x') <\delta \; \Longrightarrow \; \sup_{y\in\mathcal{D}} \vert \exp(-E(y,x))-\exp(-E(y,x'))\vert <\varepsilon. 
\]

Let $\mathcal{P}$ be a set of probability measures on $(\mathcal{D},\mathcal{F}_{\mathcal{D}})$ with $\xi=\frac{1}{N}\sum_{j=1}^{N} \delta_{y_j} \in \mathcal{P}$ for arbitrary $N\in\mathbb{N}$ and $y_1,\dots,y_N\in\mathcal{D}$.
We define
\[
\mathcal{Z} = \left\{ z\colon\mathcal{X} \to (0,\infty) \;\bigg \vert \; x\mapsto z(x) = \int_{\mathcal{D}} \exp(-E(y,x))\, \gamma({\rm d} y), \gamma\in\mathcal{P} \right\}.
\]
The standing assumption yields that $\mathcal{Z}$ contains only continuous functions, such that we can equip this set easily with a suitable $\sigma$-algebra $\mathcal{F}_{\mathcal{Z}}$. Now let $(\pi_z)_{z\in\mathcal{Z}}$ be the sequence of probability measures on $(\cX,\mathcal{B}(\cX))$ given by
\[
\pi_z(\d x) = \frac{\exp(-E(\bar{y},x))}{C_{z}\, z(x)} \nu(\d x), \quad z\in\mathcal{Z},
\]
with $C_{z} = \int_\cX \exp(-E(\bar{y},x))z(x)^{-1} \nu(\d x)$.

To use a point-wise proxy for $z^*$ below we rely on the following. 
\begin{ass} \label{ass: u_and_ell}
	Let $\ell,u\colon\cX \to (0,\infty)$ be functions satisfying $\ell(x) \leq \exp(-E(y,x)) \leq u(x)$ for all $(y,x)\in\mathcal{D}\times\cX$. 
	For $x_0\in\mathcal{X}$ let
	\[
	R_1(x_0)
	:=
	\frac{\int_{\mathcal{X}}  \rho(x,x_0)^2\,\exp(-E(\bar{y},x)\ell(x)^{-1}\nu({\rm d}x)}{\int_{\mathcal{X}} \exp(-E(\bar{y},x')) u(x')^{-1} \nu({\rm d}x')}
	\]
	and assume that
	
	\[
	R^{(\ell,u)}:= \inf_{x_0\in\cX}\left[\int_{\cX} (\rho(x_0,x)^2+R_1(x_0)) \frac{u(x)^2}{\ell(x)^2} \pi_{z^*}(\d x)\right]<\infty.
	\]
\end{ass}	
Using Assumption \ref{ass: u_and_ell}, following \cite[Section~3.1]{habeck2020stability}, we obtain the following for an iid Monte Carlo estimator of $z^*$; see Appendix \ref{subsec:appendix_proofs_doubly_intractable} for a proof.
\begin{prop}
	\label{prop: expl_err_bnd}
	Let Assumption~\ref{ass: u_and_ell} be satisfied.
	Then, 
	\begin{equation}
		\mathbb{E}[ W(\pi_{\widehat{Z}^{(N)}},\pi_{z^*})^2] 
		\leq 
		\frac{2R^{(\ell,u)}}{N},
	\end{equation}
	where for $N\in\mathbb{N}$ the $\mathcal{Z}$-valued random variable $\widehat{Z}^{(N)}$ is given as
	$
	x\mapsto\widehat{Z}^{(N)}(x) := \frac{1}{N} \sum_{j=1}^N \exp(-E(Y_j,x))
	$  
	for an iid 
	sequence of $\mu$-distributed random variables $(Y_j)_{j \in \N}$.
\end{prop}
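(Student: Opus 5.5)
The plan is to bound $W(\pi_{\widehat Z^{(N)}},\pi_{z^*})$ pathwise by a weighted total variation expression, and then control the second moment with the elementary variance of an iid mean. Fix $x_0\in\cX$ and write $\widehat Z=\widehat Z^{(N)}$. Using the coupling that keeps the common mass in place and transports the rest through $x_0$, we get $W(\mu_1,\mu_2)\le \int_\cX \rho(x,x_0)\,\vert\mu_1-\mu_2\vert(\d x)$. Alternatively, by \eqref{eq: Kant_Rub_vs}, after subtracting $g(x_0)$ from $g$ with $\Vert g\Vert_\rho\le 1$. The ratio $\frac{\d\pi_{z}}{\d\nu}=\exp(-E(\bar y,\cdot))/(C_z z)$ lets us split the density difference into two terms:
\[
\frac{\d\pi_{\widehat Z}}{\d\nu}-\frac{\d\pi_{z^*}}{\d\nu}
=\frac{e^{-E(\bar y,x)}}{C_{\widehat Z}}\Big(\frac{1}{\widehat Z(x)}-\frac{1}{z^*(x)}\Big)
+\frac{e^{-E(\bar y,x)}}{z^*(x)}\Big(\frac{1}{C_{\widehat Z}}-\frac{1}{C_{z^*}}\Big).
\]
The second term equals $\frac{C_{z^*}-C_{\widehat Z}}{C_{\widehat Z}}\,\frac{\d\pi_{z^*}}{\d\nu}$. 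Moreover, $\vert C_{z^*}-C_{\widehat Z}\vert\le \int e^{-E(\bar y,x)}\vert \widehat Z(x)^{-1}-z^*(x)^{-1}\vert\,\nu(\d x)$.

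Next we use the pointwise sandwich from Assumption~\ref{ass: u_and_ell}. Since $\ell\le e^{-E(y,\cdot)}\le u$ for all $y$, both $\widehat Z(x)$ and $z^*(x)$ lie in $[\ell(x),u(x)]$. Hence $C_{\widehat Z}\ge \int e^{-E(\bar y,x')}u(x')^{-1}\nu(\d x')=:c_u$, which is exactly the denominator of $R_1(x_0)$. Furthermore,
\[
\Big\vert \frac1{\widehat Z(x)}-\frac1{z^*(x)}\Big\vert=\frac{\vert z^*(x)-\widehat Z(x)\vert}{\widehat Z(x)z^*(x)}\le \frac{\vert z^*(x)-\widehat Z(x)\vert}{\ell(x)\, z^*(x)}.
\]
I will rewrite both contributions as integrals against $\pi_{z^*}$, pulling out the factor $z^*C_{z^*}$. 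This gives
\[
W\le \frac{C_{z^*}}{c_u}\Big[\int \rho(x,x_0)\,\tfrac{\vert z^*-\widehat Z\vert}{\ell}\,\d\pi_{z^*}+\mathrm{ecc}_{z^*}(x_0)\int \tfrac{\vert z^*-\widehat Z\vert}{\ell}\,\d\pi_{z^*}\Big].
\]
The exact bookkeeping of where $\ell^{-1}$, $C_{z^*}$ and $c_u$ sit is to be arranged to match $R_1$. A natural variant is to apply Cauchy--Schwarz with respect to $\nu$ to the first term, using the weight $e^{-E(\bar y,x)}\ell(x)^{-1}$, which is exactly how $\int\rho(x,x_0)^2 e^{-E}\ell^{-1}\d\nu/c_u$ arises.

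For the second moment I will use $(a+b)^2\le 2a^2+2b^2$, which yields the factor $2$, together with Cauchy--Schwarz (Jensen) in each integral to move the square inside. Fubini then reduces everything to the pointwise variance $\E\vert \widehat Z(x)-z^*(x)\vert^2=\frac1N\mathrm{Var}(e^{-E(Y_1,x)})\le \frac{u(x)^2}{N}$. The resulting integrand $\frac{u^2}{\ell^2}$ is then weighted by $\rho(x_0,x)^2$ from the first term and by $R_1(x_0)$ from the second. This gives $\E W^2\le \frac{2}{N}\int(\rho(x_0,x)^2+R_1(x_0))\frac{u^2}{\ell^2}\d\pi_{z^*}$, and taking the infimum over $x_0$ yields $2R^{(\ell,u)}/N$. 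Measurability of $W(\pi_{\widehat Z},\pi_{z^*})$ follows as in Remark~\ref{rem:alternative_T_1_bound}.

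The main obstacle is the bookkeeping in the normalizing-constant term. There, the random quantity $1/C_{\widehat Z}$ and the squared integral must be split so that the deterministic prefactor becomes exactly $R_1(x_0)$ and not a worse constant. I would first try Cauchy--Schwarz in the form
\[
\Big(\int \rho\,\vert f\vert\,e^{-E}\d\nu\Big)^2\le \int \rho^2 e^{-E}\ell^{-1}\d\nu\cdot\int \vert f\vert^2 \ell\, e^{-E}\d\nu,
\]
with $f=\widehat Z^{-1}-z^{*-1}$, and then divide by $c_u$ and convert back to $\pi_{z^*}$ using $z^*\le u$.
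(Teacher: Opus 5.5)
There is a genuine gap: your pathwise bound is correct, but it does not give the stated constant. In particular, the claimed step ``This gives $\E W^2\le \frac{2}{N}\int(\rho(x_0,x)^2+R_1(x_0))\frac{u^2}{\ell^2}\,\d\pi_{z^*}$'' does not follow from your own display.

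The problem is the order of your telescoping split. It attaches the random normalizer $C_{\widehat Z}$ to the difference $\frac1{\widehat Z}-\frac1{z^*}$. Your first term is therefore exactly $\frac{C_{z^*}}{C_{\widehat Z}}\int\rho(x,x_0)\,\vert z^*/\widehat Z-1\vert\,\pi_{z^*}(\d x)$. The only deterministic control available is $C_{\widehat Z}\ge c_u$, which leaves a factor $C_{z^*}/c_u$. Since $z^*\le u$ gives $C_{z^*}\ge c_u$, this factor is at least $1$ and in general strictly larger, so it cannot be absorbed. After squaring you get $(C_{z^*}/c_u)^2\int\rho^2\frac{u^2}{\ell^2}\,\d\pi_{z^*}$ in place of $\int\rho^2\frac{u^2}{\ell^2}\,\d\pi_{z^*}$.

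Your normalizing-constant term has the same defect. It produces $(C_{z^*}/c_u)^2\,{\ecc}_{z^*}(x_0)^2$, and Cauchy--Schwarz bounds this only by $(C_{z^*}/c_u)\,R_1(x_0)$, not by $R_1(x_0)$. The Cauchy--Schwarz variant you propose ends at $\frac{C_{z^*}}{c_u}R_1(x_0)\frac1N\int\frac{u^2}{\ell z^*}\,\d\pi_{z^*}$, with the same surplus factor. Invoking $z^*\le u$ pushes in the wrong direction here. So your route gives an $O(1/N)$ bound, but not $2R^{(\ell,u)}/N$.

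The fix, which is what the paper does, is to telescope the other way:
\[
\frac{1}{C_{z^*}z^*}-\frac{1}{C_{\widehat Z}\widehat Z}=\frac{1}{C_{z^*}}\Big(\frac1{z^*}-\frac1{\widehat Z}\Big)+\frac1{\widehat Z}\Big(\frac1{C_{z^*}}-\frac1{C_{\widehat Z}}\Big).
\]
This yields
\[
W(\pi_{\widehat Z},\pi_{z^*})\le\int\rho(x,x_0)\,\vert z^*/\widehat Z-1\vert\,\pi_{z^*}(\d x)+\vert C_{\widehat Z}/C_{z^*}-1\vert\int\rho(x,x_0)\,\pi_{\widehat Z}(\d x).
\]
Now the first term carries the deterministic normalizer. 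The only random normalization sits inside $\pi_{\widehat Z}$, and since $\frac{\d\pi_{\widehat Z}}{\d\nu}\le e^{-E(\bar y,\cdot)}/(c_u\,\ell)$ you get $\int\rho^2\,\d\pi_{\widehat Z}\le R_1(x_0)$ pathwise; this is exactly what $R_1$ is built for.

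For the ratio, use $C_{\widehat Z}/C_{z^*}-1=\int(z^*/\widehat Z-1)\,\d\pi_{z^*}$, so Jensen and Fubini give $\E\vert C_{\widehat Z}/C_{z^*}-1\vert^2\le\int\E\vert z^*/\widehat Z-1\vert^2\,\d\pi_{z^*}$. Then apply $(a+b)^2\le 2a^2+2b^2$, Jensen in each term, and your pointwise estimate $\E\vert z^*/\widehat Z-1\vert^2\le \E\vert z^*-\widehat Z\vert^2/\ell^2\le u^2/(N\ell^2)$. This gives exactly $\frac2N\int(\rho(x,x_0)^2+R_1(x_0))\frac{u^2}{\ell^2}\,\d\pi_{z^*}$, and the infimum over $x_0$ completes the proof.
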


Next we provide the concrete AIR scheme in the doubly intractable setting. We emphasize here that 
$K\colon (\mathcal{Z}\times \cX) \times \mathcal{B}(\cX)  \to [0,1]$, also denoted as $K_z(x,A) = K(z,x,A) $ for $(x,A)\in\cX\times\mathcal{B}(\cX)$ and $z\in\mathcal{Z}$, is not further specified. Partially, this is because we focus 
on the contribution of the error by approximating $z^*$, but also find this generic framework useful for a proof of concept verification. Algorithmically, (under appropriate assumptions) one may have a Metropolis-Hastings scenario in mind, but also Gibbs or slice sampling kernels may be considered. Finally, we also use an increasing function $N\colon\{0,t_1,t_2,\dots\} \to \mathbb{N}\cup\{0\}$, where $t_j = j(j+1)/2$ in the upcoming algorithm.

\begin{alg}
	\label{alg:adaptive_mcmc_doubly_intractable}
	For $n\in\mathbb{N}$ given $(X_0,\dots,X_n)=\widetilde{x}\in\cX^{n+1}$ with $X_n=x_n\in\cX$ and $(Z_0,\dots,Z_{n-1})=\widetilde{z}\in\cZ^{n}$ we sample $X_{n+1}$ by performing:
	\begin{enumerate}
		\item \textbf{If} $n= j(j+1)/2$ for some $j \in \N$,
		
		\quad \textbf{then} sample $Y_{N(t_{j-1})+1},\dots,Y_{N(t_j)}$ iid w.r.t.~$\mu$, call the result
		
		\quad $y_{N(t_{j-1})+1},\dots,y_{N(t_j)}$ and set $x\mapsto z_n(x)$ as 
		\begin{align*}
			z_n(x) 
			&:= \frac{1}{N(n)} \sum_{k=1}^{N(n)} \exp(-E(y_k,x)).
		\end{align*}
		
		\textbf{Else} set $z_{n} = z_{n-1}$, i.e., $Z_n\sim \delta_{z_{n-1}}$.
		\item Sample $X_{n+1}\sim K(z_{n},x_{n},\cdot)$.
	\end{enumerate}
\end{alg}

Observe that $z_n$ from Algorithm~\ref{alg:adaptive_mcmc_doubly_intractable}
coincides with a realization of $\widehat{Z}^{(N(t_{m(n)}))}$ from Proposition~\ref{prop: expl_err_bnd}. Next, under appropriate assumptions we provide an error bound for $(X_n)_{n \in \N_0}$ corresponding to Algorithm \ref{alg:adaptive_mcmc_doubly_intractable}.
Using Corollary \ref{cor:error_decomp_bounded_eccentricities} together with Proposition \ref{prop: expl_err_bnd} we obtain the following. 

	\begin{prop}
	\label{prop: doubly_intract_err_bnd}	
Let $(K_z)_{z \in \mathcal{Z}}$ in the setting of this subsection be simultaneously Wasserstein contractive and Assumption~\ref{ass: u_and_ell} be true. Let $(X_n, Z_n)_{n\in\mathbb{N}_0}$ be specified via Algorithm~\ref{alg:adaptive_mcmc_doubly_intractable} with $N(t_j) = 
	\left\lceil
	 j^{1+\alpha}
	\right\rceil 
	$ for $j\in\mathbb{N}$ and $\alpha\geq0$, and assume $(X_n, Z_n)_{n \in \N_0}$ is metric regular. Then, there exists $\widetilde{C}\in (0,\infty)$ (depending on $\alpha, \kappa, \Lambda, \tau, C, R^{(\ell,u)}$) such that
	\[
	\mathbb{E}\vert \widehat{\pi}_n(h) - \pi_{z^*}(h) \vert^2 
	\leq 3 \widetilde{C} \Vert h \Vert_{\rho}^2 \cdot
	\begin{cases}
		{n^{-\frac{1+\alpha}{2}}(1-\alpha)^{-1}} & \alpha\in [0,1)\\
		{n^{-1}} \,{\log(n)} & \alpha=1\\
		{n^{-1}(1-\alpha)^{-1}} & \alpha\in (1,\infty),	
	\end{cases}
	\]
	for any $n \in \N$ and Lipschitz function $h \colon \cX \to \R$ such that $\pi_z(h) < \infty$ for any $z \in \mathcal{Z} \cup \{z^*\}$.
\end{prop}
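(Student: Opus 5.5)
The plan is to verify the hypothesis of Corollary~\ref{cor:error_decomp_bounded_eccentricities} using Proposition~\ref{prop: expl_err_bnd}, and then absorb all constants into $\widetilde{C}$. The remaining hypotheses of Theorem~\ref{thm:mse_error_general} hold by assumption: simultaneous Wasserstein contraction, metric regularity, $\Vert h\Vert_\rho<\infty$ and $\pi_z$-integrability of $h$. So the only work is to bound the bias quantity
\[
\sup_{\Vert g \Vert_\rho\leq 1} \mathbb{E}\vert \pi_{Z_{t_k}}(g)-\pi_{z^*}(g) \vert^2 .
\]

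First, $\vert\pi_{Z_{t_k}}(g)-\pi_{z^*}(g)\vert\le W(\pi_{Z_{t_k}},\pi_{z^*})$ pointwise for every $g$ with $\Vert g\Vert_\rho\le 1$ by \eqref{eq: Kant_Rub_vs}. Hence the supremum of expectations is at most $\mathbb{E}[W(\pi_{Z_{t_k}},\pi_{z^*})^2]$; measurability is as in Remark~\ref{rem:alternative_T_1_bound}.

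Next, by construction of Algorithm~\ref{alg:adaptive_mcmc_doubly_intractable}, $Z_{t_k}$ is the empirical average of $\exp(-E(Y_i,\cdot))$ over the first $N(t_k)=\lceil k^{1+\alpha}\rceil$ iid $\mu$-distributed samples. The $Y_i$ are drawn fresh and independently of the chain, so $Z_{t_k}$ has the law of $\widehat{Z}^{(N(t_k))}$. Proposition~\ref{prop: expl_err_bnd} then gives
\[
\mathbb{E}[W(\pi_{Z_{t_k}},\pi_{z^*})^2]\le \frac{2R^{(\ell,u)}}{\lceil k^{1+\alpha}\rceil}\le \frac{2R^{(\ell,u)}}{k^{1+\alpha}}\le \frac{2^{2+\alpha}R^{(\ell,u)}}{(k+1)^{1+\alpha}},
\]
using $k+1\le 2k$. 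Thus the hypothesis of Corollary~\ref{cor:error_decomp_bounded_eccentricities} holds with $b=2^{2+\alpha}R^{(\ell,u)}$. The only subtle point is the identification of the law of $Z_{t_k}$, which must not depend on the adaptive history. Here it does not, because only the values $y_i$ enter $z_n$.

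Finally, for $n\ge3$, Corollary~\ref{cor:error_decomp_bounded_eccentricities} yields in each regime a sum of a $C_1/n$ term and a bias term. When $\alpha\in[0,1)$, we use $n^{-1}\le n^{-(1+\alpha)/2}\le n^{-(1+\alpha)/2}(1-\alpha)^{-1}$. When $\alpha=1$, we use $\log(3\sqrt n)\le c\log n$ for $n\ge3$, and $1\le \log n$ there. When $\alpha>1$, the displayed factor $(1-\alpha)^{-1}$ is negative and must be read as $\vert1-\alpha\vert^{-1}$; the corollary's sum $\sum_k b(k+1)^{-\alpha}\le b/(\alpha-1)$ already gives this. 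In all three regimes everything is then bounded by $\widetilde{C}$ times the stated rate, with
\[
\widetilde{C}=\max\{C_1,\,2b,\,\dots\}, \qquad C_1=\Gamma^2\Lambda+4\Gamma^2\kappa ,
\]
which depends only on $\alpha,\kappa,\Lambda,\tau,C,R^{(\ell,u)}$. The cases $n=1,2$ are handled by Theorem~\ref{thm:mse_error_general} directly, since $m(n)=1$ and $B(1)\le 2b/2^{1+\alpha}$, after enlarging $\widetilde{C}$. For $\alpha=1$, $n=1$, the right side $\log 1=0$ is degenerate and must be treated as a convention, e.g.\ by reading it as $\log(n+1)$ or by taking $n\ge2$.
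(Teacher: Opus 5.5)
Your proposal is correct and follows the same route as the paper, which obtains the proposition by inserting the bound $\mathbb{E}[W(\pi_{\widehat{Z}^{(N)}},\pi_{z^*})^2]\le 2R^{(\ell,u)}/N$ of Proposition~\ref{prop: expl_err_bnd} (with $N=N(t_k)$, after the Kantorovich--Rubinstein reduction of Remark~\ref{rem:alternative_T_1_bound}) into Corollary~\ref{cor:error_decomp_bounded_eccentricities}. Your explicit $b=2^{2+\alpha}R^{(\ell,u)}$ and your handling of $n\le 2$, of the sign of $(1-\alpha)^{-1}$ for $\alpha>1$, and of the degenerate value $\log 1=0$ correctly fill in details the paper leaves implicit.
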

Now, let us study Proposition \ref{prop: doubly_intract_err_bnd} within the following \textbf{cost model}:
\begin{enumerate}[(c1)]
	\item A single evaluation of $E\colon \mathcal{D} \times \mathcal{X} \to \mathbb{R}$ costs $c_1>0$; 
	\item a single sample generation w.r.t.~$\mu$ costs $c_2>0$; and
	\item a single evaluation of $h\colon\mathcal{X}\to \mathbb{R}$ costs $c_3>0$.
\end{enumerate}
Under this model one evaluation of $x \mapsto z_i(x)$ `costs' $N(t_{m(i)}) c_1$, with $i \in \N$ and $m(i)$ as in \eqref{equ:m(n)_definition}. 
Standard MCMC algorithms, e.g.~Metropolis-Hastings, typically require one evaluation of $x \mapsto \exp(-E(\bar{y},x))/z_k(x)$ in the $k$-th iteration. 
Hence, after $n\in\mathbb{N}$ iterations of Algorithm~\ref{alg:adaptive_mcmc_doubly_intractable} we assume a total cost of
\begin{align*}
	\textrm{cost}(\widehat{\pi}_n):=c_1 \sum_{i=1}^n N(t_{m(i) }) + c_2 N(t_{m(n)})+ (c_1+c_3) n
\end{align*}
is involved.
We have the following auxiliary result, proven in the appendix.
\begin{lem}\label{lem:cost_doubly_intractable}
	Within the setting of Proposition~\ref{prop: doubly_intract_err_bnd} we have\footnote{For non-negative $(a_r)_{r\in I}$ and  $(b_r)_{r\in I}$ with arbitrary index set $I\not=\emptyset$, we write $a_r\preceq b_r$ if there exists $c\in\mathbb{R}$, such that $a_r\leq c b_r$ 
		for all $r\in I$ and $a_r\asymp b_r$ if $a_r \preceq b_r$ and $b_r \preceq a_r$.}
	\begin{align*}
		N(t_{m(n)}) &\asymp  n^{\frac{1+\alpha}{2}} \quad\text{and} \quad
		\sum_{i=1}^{n}N(t_{m(i)}) \asymp n^{\frac{3+\alpha}{2}}.
	\end{align*}
\end{lem}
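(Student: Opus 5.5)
The plan is to reduce both statements to the two-sided bound \eqref{equ:m(n)_estimate} on $m(n)$ and to elementary estimates on $\lceil j^{1+\alpha}\rceil$. Since $N(t_j)=\lceil j^{1+\alpha}\rceil$ and $j\ge 1$, we have
\[
j^{1+\alpha}\le N(t_j)\le j^{1+\alpha}+1\le 2j^{1+\alpha},
\]
so $N(t_j)\asymp j^{1+\alpha}$ uniformly in $j\in\N$. Next, \eqref{equ:m(n)_estimate} gives $\sqrt{n/2}\le m(n)\le 2\sqrt n$. Raising these to the power $1+\alpha\ge 1$ yields
\[
2^{-(1+\alpha)/2}\, n^{\frac{1+\alpha}{2}}\le m(n)^{1+\alpha}\le 2^{1+\alpha} n^{\frac{1+\alpha}{2}}.
\]
Combining the two displays gives the first claim $N(t_{m(n)})\asymp n^{\frac{1+\alpha}{2}}$, with constants depending only on $\alpha$.

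For the sum, apply the first claim termwise with $n$ replaced by $i$, using the same constants for every $i$. This gives
\[
\sum_{i=1}^n N(t_{m(i)})\asymp \sum_{i=1}^n i^{\frac{1+\alpha}{2}}.
\]
It then suffices to show $\sum_{i=1}^n i^{\beta}\asymp n^{\beta+1}$ with $\beta=\frac{1+\alpha}{2}\ge \frac12$. The upper bound is immediate: each term is at most $n^\beta$, so the sum is at most $n^{\beta+1}$. For the lower bound, keep only the indices $i\ge \lceil n/2\rceil$. There are at least $n/2$ such terms, each at least $(n/2)^\beta$, so the sum is at least $2^{-(\beta+1)}n^{\beta+1}$. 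Alternatively, one can compare with $\int_0^n r^\beta\,\d r$ from below. Since $\beta+1=\frac{3+\alpha}{2}$, this gives the second claim.

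The only point needing care is that $m(i)$ is defined for $i\in\N$ through \eqref{equ:m(n)_definition} and the bound \eqref{equ:m(n)_estimate} holds for all $i\ge1$. Hence no small-$i$ exceptions arise; if needed, finitely many indices can be absorbed into the constants. I do not expect any real obstacle here: all constants depend only on $\alpha$, which is exactly what the relation $\asymp$ in the statement allows.
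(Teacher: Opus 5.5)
Your proof is correct and follows essentially the same route as the paper: the two-sided bound \eqref{equ:m(n)_estimate} on $m(n)$ and an elementary estimate of the ceiling give termwise bounds on $N(t_{m(i)})$. The sum is then bounded above by $n$ times its largest term and below by keeping only the indices $i\ge n/2$. Your explicit constants, namely $\lceil x\rceil\le 2x$ for $x\ge 1$ and the tail bound $(n/2)^{\beta+1}$, are in fact more carefully justified than the paper's intermediate numerical constants, which are slightly off for small $\alpha>0$ and for $\alpha>1$; this does not affect the $\asymp$ statement.
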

If $c_1, c_2, c_3$ are closely comparable, e.g., $c_1=c_2=c_3$, then $\textrm{cost}(\widehat{\pi}_n) \asymp n^{\frac{3+\alpha}{2}}$ and we can set this into relation of the mean squared error.

\begin{cor}
	Under the assumptions of Proposition~\ref{prop: doubly_intract_err_bnd}, with  ${\rm cost}(\widehat{\pi}_n) \asymp n^{\frac{3+\alpha}{2}}$, for $\varepsilon\in (0,1)$
	let
	\[
	{\rm cost}_\varepsilon := \inf\big\{  {\rm cost}(\widehat{\pi}_n) \mid \sup_{\Vert h \Vert_{\rho}\leq 1} \mathbb{E}\vert \widehat{\pi}_n(h) - \pi_{z^*}(h) \vert^2 \leq \varepsilon,\, n\in\mathbb{N} \big\}.
	\]
	Then, for
	\begin{itemize}
		\item 
		$\alpha\in[0,1)$, we have ${\rm cost}_\varepsilon \preceq \varepsilon^{-\frac{3+\alpha}{1+\alpha}}$;
		\item $\alpha=1$, we have ${\rm cost}_\varepsilon \preceq
		\Big ( \frac{\log(\varepsilon^{-1})}{\varepsilon} \Big)^{2} $;
		\item $\alpha\in (1,\infty)$, we have  ${\rm cost}_\varepsilon \preceq
		\varepsilon^{-\frac{3+\alpha}{2}}$. 
	\end{itemize}
\end{cor}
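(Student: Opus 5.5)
The corollary follows by inverting the error bound of Proposition~\ref{prop: doubly_intract_err_bnd} and plugging the resulting iteration count into the cost asymptotics ${\rm cost}(\widehat{\pi}_n)\asymp n^{\frac{3+\alpha}{2}}$. Since the infimum defining ${\rm cost}_\varepsilon$ is over $n$, it suffices to exhibit one $n=n(\varepsilon)$ with $\sup_{\Vert h\Vert_\rho\le 1}\mathbb{E}\vert \widehat{\pi}_n(h)-\pi_{z^*}(h)\vert^2\le\varepsilon$. Then ${\rm cost}_\varepsilon\le {\rm cost}(\widehat{\pi}_{n(\varepsilon)})\preceq n(\varepsilon)^{\frac{3+\alpha}{2}}$. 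Proposition~\ref{prop: doubly_intract_err_bnd} applies to every $h$ with $\Vert h\Vert_\rho\le 1$ (assuming the integrability conditions hold), and its constant $\widetilde C$ does not depend on $h$. Hence the supremum is bounded by $3\widetilde C\,r_\alpha(n)$, where $r_\alpha(n)$ is the case-dependent rate.

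\textbf{Case $\alpha\in[0,1)$.} Here $r_\alpha(n)=n^{-\frac{1+\alpha}{2}}(1-\alpha)^{-1}$. Choose $n(\varepsilon)=\big\lceil (3\widetilde C/((1-\alpha)\varepsilon))^{\frac{2}{1+\alpha}}\big\rceil$. Because $\varepsilon<1$, the ceiling only changes the value by a bounded factor, so $n(\varepsilon)\preceq \varepsilon^{-\frac{2}{1+\alpha}}$. Therefore the cost is $\preceq \varepsilon^{-\frac{2}{1+\alpha}\cdot\frac{3+\alpha}{2}}=\varepsilon^{-\frac{3+\alpha}{1+\alpha}}$.

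\textbf{Case $\alpha>1$.} Here the rate is of order $n^{-1}$, and presumably the intended constant is $(\alpha-1)^{-1}$, which is absorbed into the implicit constant. Taking $n(\varepsilon)\asymp\varepsilon^{-1}$ gives cost $\preceq\varepsilon^{-\frac{3+\alpha}{2}}$.

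\textbf{Case $\alpha=1$.} This is the only step needing care, since we must invert $n^{-1}\log n\le c\,\varepsilon$ with $c=1/(3\widetilde C)$. I would try $n=\lceil K\varepsilon^{-1}\log(\varepsilon^{-1})\rceil$ for a large constant $K$ (increased if necessary so that $n\ge 3$). Then $\log n\le \log K+\log(\varepsilon^{-1})+\log\log(\varepsilon^{-1})+\log 2\preceq \log(\varepsilon^{-1})$ for $\varepsilon$ bounded away from $1$. This yields
\[
\frac{\log n}{n}\le \frac{C'\log(\varepsilon^{-1})}{K\varepsilon^{-1}\log(\varepsilon^{-1})}=\frac{C'\varepsilon}{K}\le c\,\varepsilon,
\]
once $K$ is large. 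The cost is then $\preceq n^{2}\asymp(\varepsilon^{-1}\log(\varepsilon^{-1}))^2$.

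For $\varepsilon$ near $1$, where $\log(\varepsilon^{-1})$ degenerates, I would choose a fixed $\varepsilon_0$. For $\varepsilon\in[\varepsilon_0,1)$ a fixed $n$ works, and its cost is a constant. That constant is dominated by the right-hand sides in the first two cases, and in the $\alpha=1$ case on $[\varepsilon_0,1-\delta]$. Honestly, at $\varepsilon\to1$ the $\alpha=1$ bound $(\log(\varepsilon^{-1})/\varepsilon)^2\to0$, so this edge needs either the convention $\varepsilon\le\varepsilon_0$ or reading $\log$ as $\max\{\log,1\}$. I would state this explicitly as the main technical caveat.
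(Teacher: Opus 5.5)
Your proposal is correct and follows the paper's proof: in each case you exhibit an explicit $n_0(\varepsilon)$ for which the bound of Proposition~\ref{prop: doubly_intract_err_bnd} is at most $\varepsilon$, and then plug it into ${\rm cost}(\widehat{\pi}_n)\asymp n^{\frac{3+\alpha}{2}}$; the only difference is that for $\alpha=1$ the paper takes $n_0=\lceil \frac{a}{\varepsilon}\log(\frac{a}{\varepsilon})\rceil$ with $a=\widetilde{C}(1+e^{-1})$ and cites a known inequality, whereas you estimate directly (which works because your constant $C'$ grows only like $\log K$ — say this explicitly when you enlarge $K$). Your two caveats are legitimate and the paper's proof passes over both silently: the factor $(1-\alpha)^{-1}$ should read $(\alpha-1)^{-1}$ for $\alpha>1$, and in the $\alpha=1$ bound $\log(\varepsilon^{-1})$ degenerates as $\varepsilon\to 1$.
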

\begin{proof}
	Consider the first case, that is, $\alpha \in [0,1)$. For treating it, set $n_0 = \big\lceil \big( \frac{3\widetilde{C}}{\eps}\big)^{2/(1+\alpha)}  \big\rceil$. 
	By Proposition~\ref{prop: doubly_intract_err_bnd}, we obtain $\sup_{\norm{h}_{\rho}\leq 1}\E[\vert \widehat{\pi}_{n_0}(h) - \pi(h)\vert^2 ] \leq \eps$. Consequently, since $\textrm{cost}(\widehat{\pi}_n) \asymp n^{\frac{3+\alpha}{2}}$, 
	\[
	\text{cost}_\varepsilon \preceq n_0^{\frac{3+\alpha}{2}} \preceq \varepsilon^{-\frac{3+\alpha}{1+\alpha}}.
	\]
	
	Consider the second case, that is, $\alpha=1$. Set $n_0 :=\lceil \frac{a}{\varepsilon} \log\big( \frac{a}{\varepsilon}\big) \rceil$ with $a=\widetilde{C}(1+e^{-1})$ we have $\widetilde{C} \log(n_0)/n_0 \leq \varepsilon$, see for example \cite[below Remark~2.2]{Hinrichs}. Taking $\textrm{cost}(\widehat{\pi}_n) \asymp n^{\frac{3+\alpha}{2}}$ and the error bound of Proposition~\ref{prop: doubly_intract_err_bnd} into account yields the desired statement.
	
	Consider the third case, that is, $\alpha \in (1, \infty)$. Set $n_0 = \big\lceil \frac{3\widetilde{C}}{\eps} \big\rceil$, such that by Proposition~\ref{prop: doubly_intract_err_bnd}, we obtain (as before) that
		$
			\text{cost}_\varepsilon \preceq n_0^{\frac{3+\alpha}{2}} \preceq \eps^{-\frac{3+\alpha}{2}}.
		$
\end{proof}

\section*{Acknowledgements}
JH is grateful for financial support from the EPSRC grant EP/W026899/2 MaThRad. Furthermore, JH and DR gratefully acknowledge the support of the German Research Foundation (DFG) within project 432680300 -- SFB 1456
subprojects B02. Finally, DR thanks the DFG for their support within the projects with grant numbers 522337282 and 578618598.

\begin{appendix}

\section{Proof of Theorem \ref{thm:mse_error_general}}\label{app:proof_main_results}
We begin with an auxiliary lemma, which allows to split the error into a martingale, an  adaptation and a bias part. This idea is not new and in one way or another has been used around the Poisson's equation approach for adaptive MCMC, see also \cite{andrieu2006ergodicity,saksman2010ergodicity,atchade2010cautionary,fort2011convergence,atchade2013bayesian,laitinen2024invitation, hofstadler2026convergenceRates}.

Throughout the section write $t_{k} = k(k+1)/2$ for $k \in \N$. Note that these numbers correspond exactly to the adaptation times in Algorithm \ref{alg:generic_scheme}.

\begin{lem}\label{lem:error_decomposition}
	Let the assumptions, the setting and $h\colon \cX \to \R$ be as in Theorem~\ref{thm:mse_error_general}. Then, for $n\in\mathbb{N}$ we have
	\begin{equation}
		\label{al:MSE_repr} 
		\mathbb{E}\vert \widehat{\pi}_n(h) - \pi_{z^*}(h) \vert^2 
		\leq \frac{3}{n^2} \left[ \mathbb{E}(M_n^2) + \mathbb{E}(A_n^2) + \mathbb{E}(B_n^2) \right],
	\end{equation}
	where we define the martingale, adaptation and bias terms respectively as
	\begin{align*}
		M_n &:= \sum_{j=1}^n  (u_{Z_{j}}(X_{j+1})-K_{Z_{j}}u_{Z_{j}}(X_{j})),\\
		A_n & := \sum_{j=1}^{m-1} ( u_{Z_{t_{j}}} (X_{t_{j}}) -   u_{Z_{t_j}} (X_{t_{j+1}})) 
		+ u_{Z_{t_m}}(X_{t_m}) - u_{Z_{t_m}}(X_{n+1}),\\
		B_n & := \sum_{j=1}^n ( \pi_{Z_{j}}(h)-\pi_{z^*}(h) ),
	\end{align*}
	and $u_z$ solves Poisson's equation with forcing function $h$, cf.~Proposition \ref{prop:solution_poisson_equation}.
\end{lem}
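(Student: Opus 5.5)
The plan is to derive a pathwise identity of the form $n(\widehat{\pi}_n(h)-\pi_{z^*}(h)) = M_n + A_n + B_n$. Once we have it, the elementary inequality $(a+b+c)^2\le 3(a^2+b^2+c^2)$ and taking expectations give \eqref{al:MSE_repr}.

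\textbf{Step 1: split off the bias.} First write
\[
n(\widehat{\pi}_n(h)-\pi_{z^*}(h)) = \sum_{j=1}^n \big(h(X_j)-\pi_{Z_j}(h)\big) + \sum_{j=1}^n\big(\pi_{Z_j}(h)-\pi_{z^*}(h)\big).
\]
The second sum is exactly $B_n$.

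\textbf{Step 2: apply Poisson's equation.} For each $z\in\mathcal{Z}$, Proposition~\ref{prop:solution_poisson_equation} gives a solution $u_z$ of \eqref{equ:Poisson_equation} with forcing function $h$. Its hypotheses hold because $h$ is $\pi_z$-integrable with $\Vert h\Vert_\rho<\infty$, $K_z$ is Wasserstein contractive by Definition~\ref{defi:uniform_Wass_contr}, and ${\ecc}_z$ is finite at some point. Metric regularity gives $\E[{\ecc}_{Z_j}(X_k)]<\infty$, so ${\ecc}_{Z_j}(X_k)<\infty$ almost surely, which is enough for our purposes. Poisson's equation then yields
\[
h(X_j)-\pi_{Z_j}(h) = u_{Z_j}(X_j) - K_{Z_j}u_{Z_j}(X_j) = \big(u_{Z_j}(X_{j+1}) - K_{Z_j}u_{Z_j}(X_j)\big) + \big(u_{Z_j}(X_j)-u_{Z_j}(X_{j+1})\big).
\]
Summing the first bracket over $j=1,\dots,n$ gives $M_n$.

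\textbf{Step 3: telescope within adaptation blocks.} By Algorithm~\ref{alg:generic_scheme}, $Z_j = Z_{t_k}$ for all $t_k\le j<t_{k+1}$. Also $t_1=1$, so the summation starting at $j=1$ begins exactly at an adaptation time. Hence for $k=1,\dots,m-1$ the block sum $\sum_{j=t_k}^{t_{k+1}-1}(u_{Z_{t_k}}(X_j)-u_{Z_{t_k}}(X_{j+1}))$ telescopes to $u_{Z_{t_k}}(X_{t_k})-u_{Z_{t_k}}(X_{t_{k+1}})$. By the definition \eqref{equ:m(n)_definition} of $m$, the last block $t_m\le j\le n$ lies before the next adaptation and gives $u_{Z_{t_m}}(X_{t_m})-u_{Z_{t_m}}(X_{n+1})$. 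Adding these blocks reproduces $A_n$ exactly, and the identity follows.

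\textbf{Main obstacle: well-definedness rather than algebra.} The delicate part is ensuring that all terms are genuine, finite random variables. One point is the joint measurability of $(z,x)\mapsto u_z(x)$ and $(z,x)\mapsto K_zu_z(x)$. I would obtain this from the series representation in Proposition~\ref{prop:solution_poisson_equation}, since each term $K_z^kh(x)-\pi_z(h)$ is measurable in $(z,x)$ because $K$ and $\Pi$ are kernels. The other point is finiteness. Here Proposition~\ref{prop:solution_poisson_equation}(b) gives $|u_{Z_j}(X_k)|\le \Gamma\Vert h\Vert_\rho\,{\ecc}_{Z_j}(X_k)$, and $K_{Z_j}|u_{Z_j}|(X_j)$ is handled similarly via the Lipschitz bound (a). No integrability beyond this is needed: if a right-hand expectation in \eqref{al:MSE_repr} is infinite, the inequality holds trivially.
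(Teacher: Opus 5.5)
Your proposal is correct and follows the paper's proof essentially step for step. You split off $B_n$, use Poisson's equation (valid almost surely because metric regularity forces ${\ecc}_{Z_j}(X_k)<\infty$ a.s.) to extract $M_n$, telescope the remainder over the blocks $t_k\le j<t_{k+1}$ (using $t_1=1$ and $t_m\le n<t_{m+1}$) to get $A_n$, and finish with $(a+b+c)^2\le 3(a^2+b^2+c^2)$; your extra remarks on the joint measurability of $(z,x)\mapsto u_z(x)$ and on the trivial case of an infinite right-hand side are slightly more careful than the paper.
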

\begin{proof}
	If not stated otherwise, all steps of the proofs are understood $\mathbb{P}$-almost surely.
	The definition of $B_n$ yields
	\begin{align}\label{equ:martingale_decomposition_1}
		\sum_{j=1}^n (h(X_j)-\pi_{z^*}(h))  
		&= \sum_{j=1}^n \left( h(X_j) - \pi_{Z_j}(h) \right)   + B_n .
	\end{align}
	Since $\mathbb{E}[{\ecc_{Z_j}}(X_k)] < \infty$, the eccentricities can be infinite at most on a set of $\mathbb{P}$-measure zero. Hence, by Proposition~\ref{prop:solution_poisson_equation}, 
	\begin{align*}
		h(X_j) - \pi_{Z_j}(h) &= u_{Z_j} (X_j) - K_{Z_j} u_{Z_j} (X_j) \\ &= u_{Z_j} (X_{j+1}) - K_{Z_j} u_{Z_j} (X_j) + u_{Z_j} (X_{j}) - u_{Z_j} (X_{j+1}).
	\end{align*}
	Consequently, 
	\begin{equation}\label{equ:martingale_decomposition_2}
		\sum_{j=1}^n \left( h(X_j) - \pi_{Z_j}(h) \right) = M_n + \sum_{j=1}^n (u_{Z_j} (X_{j}) - u_{Z_j} (X_{j+1})).
	\end{equation}
	Fix $k \in \N$, such that for
	$j \in \{t_{k}, \dots, t_{k+1}-1  \}$ it follows from the specification of Algorithm \ref{alg:generic_scheme} that $Z_j \equiv Z_{t_k}$. 
	Thus, a telescoping argument yields
	\begin{align*}
		& \sum_{j=t_k}^{t_{k+1}-1} (u_{Z_j} (X_{j}) - u_{Z_j} (X_{j+1})) = u_{Z_{t_k}}(X_{t_k}) - u_{Z_{t_{k}}}(X_{t_{k+1}}),\\
		& \sum_{j=t_m}^{n} (u_{Z_j} (X_{j}) - u_{Z_j} (X_{j+1})) = u_{Z_{t_m}}(X_{t_m})-u_{Z_{t_m}}(X_{n+1}),
	\end{align*}
	where the last equation make sense, since $m=m(n)$ satisfies $t_m \leq n <t_{m+1}$. By the latter identities and the fact that $t_1=1$ we see
	\begin{align}\label{equ:martingale_decomposition_3}
		&\sum_{j=1}^n  (u_{Z_j} (X_{j}) - u_{Z_j} (X_{j+1})) \\ &= \sum_{k=1}^{m-1} \sum_{j=t_k}^{t_{k+1}-1} (u_{Z_j} (X_{j}) - u_{Z_j} (X_{j+1})) + \sum_{j=t_m}^{n} (u_{Z_j} (X_{j}) - u_{Z_j} (\xi_{j+1})) \nonumber =  A_n.
	\end{align}
	Combining \eqref{equ:martingale_decomposition_1},\eqref{equ:martingale_decomposition_2} and \eqref{equ:martingale_decomposition_3} we get 
	\[
	\sum_{j=1}^n (h(X_j)-\pi_{z^*}(h))  = M_n + A_n + B_n.
	\]
	By applying the inequality $(a+b+c)^2 \leq 3 (a^2 + b^2 + c^2)$, that is true for any $a,b,c \in \R$, we finish the proof. 
\end{proof}

The upcoming lemma shows that $(M_n)_{n \in \N}$ is indeed a martingale. 

\begin{lem}
	\label{lem: martingale}
	Let the assumptions and the setting be as in Theorem~\ref{thm:mse_error_general}.	 Then, the sequence $(M_n)_{n \in \N}$ defined in Lemma \ref{lem:error_decomposition} is an $(\mathcal{F}_n)_{n\in\mathbb{N}}$-martingale, where $\mathcal{F}_n:=\sigma(X_0,\dots,X_n,Z_0,\dots,Z_n)$.
\end{lem}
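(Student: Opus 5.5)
To show that $(M_n)_{n\in\N}$ is an $(\mathcal{F}_n)$-martingale we check the three defining properties: adaptedness, integrability, and the conditional-increment identity. Write the increments as $D_j := u_{Z_j}(X_{j+1}) - K_{Z_j}u_{Z_j}(X_j)$, so that $M_n=\sum_{j=1}^n D_j$.

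\emph{Adaptedness.} Here $D_j$ depends on $(Z_j,X_j,X_{j+1})$. Note that $D_n$ involves $X_{n+1}$, so $M_n$ is measurable with respect to $\sigma(X_0,\dots,X_{n+1},Z_0,\dots,Z_n)$. This is the filtration under which the increments are genuinely martingale differences, so I will either read $\mathcal{F}_n$ with this index shift or pass to the shifted filtration $\mathcal{G}_n:=\mathcal{F}_{n+1}$, noting that $Z_{n+1}$ is conditionally independent of $X_{n+2}$ given the past. Joint measurability of $(z,x)\mapsto u_z(x)$ and $(z,x)\mapsto K_zu_z(x)$ follows from the series representation in Proposition~\ref{prop:solution_poisson_equation}. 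Each partial sum $\sum_{k\le K}(K_z^kh(x)-\pi_z(h))$ is jointly measurable because $K$ and $\Pi$ are kernels, and pointwise limits preserve measurability (compare the remark on measurability of eccentricities).

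\emph{Integrability.} By Proposition~\ref{prop:solution_poisson_equation}(b) together with the simultaneous contraction constants, $|u_z(x)|\le \Gamma\|h\|_\rho\,\ecc_z(x)$. Moreover
\[
|K_zu_z(x)|\le \Gamma\|h\|_\rho\int \ecc_z(y)\,K_z(x,\d y)\le \Gamma\|h\|_\rho\Big(\ecc_z(x)+\int\rho(x,y)K_z(x,\d y)\Big),
\]
using the triangle inequality $\ecc_z(y)\le \rho(x,y)+\ecc_z(x)$. Hence
\[
\E|D_j|\le \Gamma\|h\|_\rho\Big(\E\,\ecc_{Z_j}(X_{j+1})+\E\,\ecc_{Z_j}(X_j)+\E\int\rho(X_j,y)K_{Z_j}(X_j,\d y)\Big).
\]
The first two expectations are finite by metric regularity. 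For the third, the inner integral is controlled via Jensen by $\diff(Z_j,X_j)^{1/2}$ plus eccentricity terms; alternatively one can bound it by $2\ecc$-type terms after integrating against $\pi_{Z_j}$, again using the triangle inequality.

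\emph{Martingale property.} This is the key step and the one I expect to need the most care: we need $\E[D_j\mid \mathcal{G}_{j-1}]=0$, where $\mathcal{G}_{j-1}=\sigma(X_0,\dots,X_j,Z_0,\dots,Z_j)$. By Algorithm~\ref{alg:generic_scheme} and \eqref{eq: K}, the conditional law of $X_{j+1}$ given $\mathcal{G}_{j-1}$ is $K(Z_j,X_j,\cdot)$. Since $u_{Z_j}$ is $\mathcal{G}_{j-1}$-measurable as a random function, a disintegration (freezing) lemma gives
\[
\E[u_{Z_j}(X_{j+1})\mid\mathcal{G}_{j-1}]=K_{Z_j}u_{Z_j}(X_j) \quad \text{a.s.}
\]
This lemma applies because of the integrability established above. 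The identity then yields the zero-mean increments, and summing gives the martingale property.
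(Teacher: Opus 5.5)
\paragraph{Verdict.} Your proof is correct. The martingale-difference step is the paper's: disintegration, using that $X_{j+1}$ is drawn from $K(Z_j,X_j,\cdot)$ given the whole past.

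\paragraph{Index shift.} You are right that $M_n$ contains $X_{n+1}$ and so is not $\mathcal{F}_n$-measurable. The paper's display conditions the increment $u_{Z_n}(X_{n+1})-K_{Z_n}u_{Z_n}(X_n)$ on $\mathcal{F}_n$. In effect it verifies the martingale property for your $\mathcal{G}_n=\mathcal{F}_{n+1}$, which is all that the proof of Theorem~\ref{thm:mse_error_general} needs (orthogonality of increments).

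\paragraph{Integrability: where the routes differ.} The paper uses only the Lipschitz bound (a): $|D_j|\le\Gamma\|h\|_\rho\int\rho(X_{j+1},x)K_{Z_j}(X_j,{\rm d}x)$. Passing to the conditional law of $X_{j+1}$ then gives $\mathbb{E}|D_j|\le\Gamma\|h\|_\rho\sqrt{\Lambda}$ via the coarse diffusion coefficient. This is the same kind of bound later used for the second moments of the increments. You instead use the pointwise bound (b) together with the first-moment eccentricity part of metric regularity. This makes $u_{Z_j}(X_{j+1})$ and $K_{Z_j}u_{Z_j}(X_j)$ integrable separately, which is exactly what the freezing lemma formally requires. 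The paper, by contrast, only controls their difference.

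\paragraph{Your bound on $K_{Z_j}u_{Z_j}(X_j)$.} It is roundabout. The detour $\int\rho(x,y)K_z(x,{\rm d}y)\le\ecc_z(x)+K_z\ecc_z(x)$ brings you back to $\mathbb{E}[K_{Z_j}\ecc_{Z_j}(X_j)]$. What actually closes the argument is the identity $\mathbb{E}[K_{Z_j}\ecc_{Z_j}(X_j)]=\mathbb{E}[\ecc_{Z_j}(X_{j+1})]$, from the conditional law of $X_{j+1}$. You can apply it directly to $|K_zu_z(x)|\le\Gamma\|h\|_\rho K_z\ecc_z(x)$.

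\paragraph{Two inaccurate side remarks.}
\begin{enumerate}
\item Your $\diff^{1/2}$ suggestion for $\int\rho(x,y)K_z(x,{\rm d}y)$ does not work. $\diff(z,x)$ measures the spread between two independent draws from $K_z(x,\cdot)$, not the displacement from $x$. For example, with $K_z(x,\cdot)=\delta_{f(x)}$ it vanishes while $\rho(x,f(x))$ is arbitrary.
\item $Z_{n+1}$ is not conditionally independent of $X_{n+2}$ given the past, since $X_{n+2}$ is drawn using $Z_{n+1}$. No such statement is needed. What you need, and later use correctly, is that the conditional law of $X_{n+2}$ given $\mathcal{F}_{n+1}$ is $K(Z_{n+1},X_{n+1},\cdot)$.
\end{enumerate}
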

\begin{proof}
	As in the proof of Lemma \ref{lem:error_decomposition} we note that all requirements to apply Proposition~\ref{prop:solution_poisson_equation} are met. Hence,	
	\begin{align*}
		& \mathbb{E}\vert M_n \vert 
		\leq  \sum_{k=1}^n \mathbb{E} \vert u_{Z_{k}}(X_{k+1})-K_{Z_{k}}u_{Z_{k}}(X_{k})\vert \\
		&\leq 
		\sum_{k=1}^n 
		\mathbb{E} \left[ \int_\cX \vert u_{Z_{k}}(X_{k+1}) - u_{Z_{k}}(x) \vert  K(Z_{k},X_{k},{\rm d}x)  \right]\\
		&  	\leq {\Gamma \Vert h \Vert_{\rho} } \sum_{k=1}^n \mathbb{E}\left[\int_\cX \rho(X_{k+1},x) K(Z_{k},X_{k},{\rm d}x) \right]\\
		&  =  {\Gamma \Vert h \Vert_{\rho} } \sum_{k=1}^n \mathbb{E}\left[\int_\cX\int_{\cX}\rho(\overline{x},x) K(Z_{k},X_{k},{\rm d}x) K(Z_{k},X_{k},{\rm d}\overline{x}) \right]\\
		&\leq  {n \Gamma \Vert h \Vert_{\rho} \sqrt{\Lambda}}  <\infty.
	\end{align*}
	Furthermore,
	\begin{align*}
		\mathbb{E}[M_{n+1}\mid \mathcal{F}_n]
		= M_n + \mathbb{E}(u_{Z_{n}}(X_{n+1})\mid X_n,Z_n)-K_{Z_n}u_{Z_n}(X_n) = M_n,
	\end{align*}
	where we used that $\mathbb{E}(u_{Z_{n}}(X_{n+1})\mid X_n,Z_n)=K_{Z_n}u_{Z_n}(X_n)$, which is justified by the disintegration theorem, cf.~\cite[Theorem~6.4]{kallenberg1997foundations}.
\end{proof}

Now we are able to prove the main result, i.e.~Theorem \ref{thm:mse_error_general}.

\begin{proof}[Proof of Theorem~\ref{thm:mse_error_general}]
	By Lemma~\ref{lem:error_decomposition} it is sufficient to estimate  $\mathbb{E}(M_n^2)$, $\mathbb{E}(A_n^2)$ and $\mathbb{E}(B_n^2)$ suitably.
	
	\textbf{We start with the $\mathbb{E}(M_n^2)$ term:} 
	Set $\Delta_k = u_{Z_{k}}(X_{k+1})-K_{Z_{k}}u_{Z_{k}}(X_{k})$, such that $M_n = \sum_{j=1}^n \Delta_j$. 
	By Lemma \ref{lem: martingale} $(M_n)_{n \in \N}$ is a martingale, such that $
	\mathbb{E}(\Delta_k \Delta_j)  = 0$ for $j,k\in\mathbb{N}$ with $j\neq k$.
	Consequently,
	\begin{align*}
		\mathbb{E}(M_n^2) = \sum_{k=1}^{n}\sum_{j=1}^{n} \mathbb{E}(\Delta_k \Delta_j) = \sum_{j=1}^n \mathbb{E} \Delta_j^2.
	\end{align*}
	By Proposition \ref{prop:solution_poisson_equation} solutions of Poisson's equation are Lipschitz, such that
	\begin{align*}
		\vert \Delta_k \vert 
		=  & \vert u_{Z_{k}}(X_{k+1})-K_{Z_{k}} 		
		u_{Z_{k}}(X_{k}) \vert 
		\leq  \int_\cX \vert u_{Z_{k}}(X_{k+1}) - u_{Z_{k}}(x) \vert K_{Z_{k}}(X_{k},{\rm d}x)\\
		\leq& {\Gamma\Vert h \Vert_{\rho}} \int_\cX \rho(X_{k+1},x) K_{Z_{k}}(X_{k,}{\rm d}x).
	\end{align*}
	Jensen's inequality yields
	\begin{align*}
		\mathbb{E} \Delta_k^2 & \leq {\Gamma^2\Vert h \Vert_{\rho}^2 } \mathbb{E} \Big[ \int_\cX \rho(X_{k+1},x)^2 K_{Z_{k}}(X_{k},{\rm d}x)\Big]\\
		& = 
		{\Gamma^2\Vert h \Vert_{\rho}^2 } \mathbb{E} \Big[ \mathbb{E}\Big[\int_\cX \rho(X_{k+1},x)^2 K_{Z_{k}}(X_{k},{\rm d}x)\mid Z_{k},X_{k}\Big]\Big]\\
		& = 
		{\Gamma^2\Vert h \Vert_{\rho}^2} \mathbb{E} \Big[ \int_\cX \int_\cX \rho(\overline{x},x)^2 K_{Z_{k}}(X_{k},{\rm d}x)K_{Z_{k}}(X_{k},{\rm d}\overline{x})\Big]
		\leq 
		{\Gamma^2\Vert h \Vert_{\rho}^2\Lambda}.
	\end{align*}
	Therefore 
	\[
	\mathbb{E}(M_n^2) \leq {n\Gamma^2 \Vert h \Vert_{\rho}^2\Lambda}.
	\]
	
	\textbf{We turn to the $\mathbb{E}(A_n^2)$ term:} By an application of Jensen's inequality (for sums)
	we obtain
	\[
	A_n^2 \leq m \Big( \sum_{k=1}^{m-1} \left\vert u_{Z_{t_k}} (X_{t_k}) - u_{Z_{t_k}}(X_{t_{k+1}})  \right\vert^2 + \left\vert u_{Z_{t_m}} (X_{t_{m}}) - u_{Z_{t_m}}(X_{n+1}) \right\vert^2  \Big).
	\]
	Due to Proposition \ref{prop:solution_poisson_equation} and the inequality $(a+b)^2 \leq 2(a^2+b^2)$, valid for any $a,b \in \R$, we get for any $k \in \N$ that
	\begin{align*}
		\left\vert u_{Z_{t_k}} (X_{t_k}) - u_{Z_{t_k}}(X_{t_{k+1}})  \right\vert^2 
		&\leq 
		{2(\Gamma\Vert h \Vert_{\rho})^2} \left( 
		{\ecc}_{Z_{t_k} } (X_{t_k})^2 + {\ecc}_{Z_{t_k}} (X_{t_{k+1}})^2 \right).
	\end{align*}
	By the same arguments, we estimate $\left\vert u_{Z_{t_m}} (X_{T_{m}}) - u_{Z_{t_m}}(X_{n+1}) \right\vert^2 $. 
	Combining this with the bound for the expected eccentricities and \eqref{equ:m(n)_estimate}, it follows 
	\[
	\E [A_n^2] \le m^2 2(\Gamma\Vert h \Vert_{\rho})^2 \kappa \leq 4n (\Gamma\Vert h \Vert_{\rho})^2 \kappa.
	\]
	\textbf{We turn to the $\mathbb{E}(B_n^2)$ term:} Observe that by Jensen's inequality and $Z_j=Z_{t_k}$ for $j\in\{t_k,\dots,t_{k+1}-1\}$ with $t_1=1$, we have
	\begin{align}
		\notag
		& \frac{\mathbb{E}(B_n^2)}{n} 
		\leq \sum_{j=1}^{n} \mathbb{E}\vert \pi_{Z_j}(h)-\pi_{z^*}(h) \vert^2
		\leq \sum_{k=1}^m
		\sum_{j=t_k}^{t_{k+1}-1} \mathbb{E}\vert \pi_{Z_j}(h)-\pi_{z^*}(h) \vert^2
		\\	
		\label{al: earlier_stop}
		&= \sum_{k=1}^m
		(t_{k+1}-t_{k})
		\mathbb{E}\vert \pi_{Z_{t_{k}}}(h)-\pi_{z^*}(h) \vert^2
		= \sum_{k=1}^m
		(k+1)
		\mathbb{E}\vert \pi_{Z_{t_{k}}}(h)-\pi_{z^*}(h) \vert^2
		\\
		& 
		\notag
		\leq \Vert h \Vert_\rho^2 \sum_{k=1}^{m}
		(k+1) \sup_{\Vert g \Vert_\rho\leq 1} \mathbb{E}\vert \pi_{Z_{t_k}}(g)-\pi_{z^*}(g) \vert^2
		= \Vert h \Vert_\rho^2 B(m),
	\end{align}
	with $B(m) = \sum_{k=1}^{m}
	(k+1) \sup_{\Vert g \Vert_\rho\leq 1} \mathbb{E}\vert \pi_{Z_{t_k}}(g)-\pi_{z^*}(g) \vert^2$.
	
	Combining these bounds with Lemma \ref{lem:error_decomposition} finishes the proof.

\end{proof}

\section{Proofs of Section \ref{sec:doubly_intractable}}\label{subsec:appendix_proofs_doubly_intractable}

\begin{proof}[Proof of Proposition \ref{prop: expl_err_bnd}]
	For arbitrary $\widehat{z}\in\mathcal{Z}$ and $x_0\in\mathcal{X}$, observe that by the Kantorovich-Rubinstein duality
	\[
	W(\pi_{\widehat{z}},\pi_{z^*}) 
	= 
	\sup_{\Vert h \Vert_{\rho}\leq 1,\; h(x_0)=0}
	\left \vert \pi_{z^*}(h)-\pi_{\widehat{z}}(h) \right \vert.
	\]
	Assume that $h\colon \mathcal{X} \to \mathbb{R}$ satisfies $\Vert h \Vert_{\rho}\leq 1$ and $h(x_0)=0$. Then
	\begin{align*}
		&	\vert \pi_{z^*}(h)-\pi_{\widehat{z}}(h)\vert 
		\leq 	\int_{\mathcal{X}} \rho(x,x_0)
		\left\vert \frac{1}{C_{z^*}z^*(x)} - \frac{1}{C_{\widehat{z}}\,
			\widehat{z}(x)} \right\vert
		\exp(-E(\bar{y},x)) \nu({\rm d}x)\\	  
		&	\leq \int_{\mathcal{X}} \rho(x,x_0) 
		\left \vert \frac{z^*(x)}{\widehat{z}(x)}-1\right \vert \pi_{z^*}(\d x)+\left\vert \frac{C_{\widehat{z}}}{C_{z^*}} -1 \right \vert \int_{\mathcal{X}}  \rho(x,x_0)\, \pi_{\widehat{z}}({\rm d}x).
	\end{align*}
	By Jensen's inequality we obtain
	\begin{align} \notag
		W(\pi_{\widehat{Z}^{(N)}},\pi_{z^*})^2
		\leq & 2 \int_{\mathcal{X}} \rho(x,x_0)^2 \;\Big \vert \frac{z^*(x)}{\widehat{Z}^{(N)}(x)}-1\Big \vert^2 \pi_{z^*}(\d x)\\
		& \qquad +2\;\Big\vert \frac{C_{\widehat{Z}^{(N)}}}{C_{z^*}} -1 \Big \vert^2 \int_{\mathcal{X}}  \rho(x,x_0)^2\, \pi_{\widehat{Z}^{(N)}}({\rm d}x). \label{al: est_before_mean}
	\end{align}
	By Assumption~\ref{ass: u_and_ell} we have $\ell(x)\leq\widehat{Z}^{(N)}(x)\leq u(x)$, such that
	$C_{\widehat{Z}^{(N)}} \geq \int_{\mathcal{X}} \exp(-E(\bar{y},x')) u(x')^{-1} \nu({\rm d}x')$
	and  
	$
	\int_{\mathcal{X}}  \rho(x,x_0)^2\, \pi_{\widehat{Z}^{(N)}}({\rm d}x)
	\leq R_1(x_0).
	$
	Estimating \eqref{al: est_before_mean} by incorporating $R_1(x_0)$ and afterwards taking the expectation yields (with a Fubini argument)
	\begin{align}
		\notag
		\mathbb{E}[	W(\pi_{\widehat{Z}^{(N)}},\pi_{z^*})^2]
		&\leq 2 \int_{\mathcal{X}} \rho(x,x_0)^2 \;\mathbb{E}\left[\Big \vert \frac{z^*(x)}{\widehat{Z}^{(N)}(x)}-1\Big \vert^2\right] \pi_{z^*}(\d x)\\	
		& \qquad\qquad\qquad\qquad\qquad +2R_1(x_0)\,\mathbb{E}\left[\Big\vert \frac{C_{\widehat{Z}^{(N)}}}{C_{z^*}} -1 \Big \vert^2\right].
		\label{al: almost_done}
	\end{align}
	By Jensen's inequality and again by a Fubini argument we have 
	\begin{align}
		\notag
		& \mathbb{E}\left[\Big\vert \frac{C_{\widehat{Z}^{(N)}}}{C_{z^*}} -1 \Big \vert^2\right]
		= \mathbb{E}\left[\Big\vert
		\int_{\mathcal{X}} \Big ( \frac{z^*(x)}{\widehat{Z}^{(N)}(x)}-1 \Big)
		\pi_{z^*}({\rm d}x)
		\Big \vert^2\right]\\
		& \leq \mathbb{E}\left[\int_{\mathcal{X}} \Big ( \frac{z^*(x)}{\widehat{Z}^{(N)}(x)}-1 \Big)^2
		\pi_{z^*}({\rm d}x)
		\right] 
		= \int_{\mathcal{X}} \mathbb{E}\Big \vert \frac{z^*(x)}{\widehat{Z}^{(N)}(x)}-1 \Big\vert^2
		\pi_{z^*}({\rm d}x).
		\label{al: really_almost_done}
	\end{align}
	Using $\ell(x)\leq\widehat{Z}^{(N)}(x)\leq u(x)$ yields
	\begin{align}
		\notag
		\mathbb{E}\Big \vert \frac{z^*(x)}{\widehat{Z}^{(N)}(x)}-1 \Big\vert^2
		& = \mathbb{E}\Big \vert  \frac{z^*(x)}{\widehat{Z}^{(N)}(x)} \Big (1- \frac{\widehat{Z}^{(N)}(x)}{z^*(x)} \Big) \Big\vert^2 
		\leq \frac{z^*(x)^2}{\ell(x)^2} \mathbb{E}\Big \vert   \frac{\widehat{Z}^{(N)}(x)}{z^*(x)}-1  \Big\vert^2.
	\end{align}
	The fact that $\frac{\widehat{Z}^{(N)}(x)}{z^*(x)} = \frac{1}{N}\sum_{i=1}^N\frac{\exp(-E(Y_i,x))}{z^*(x)}$ with $\mathbb{E}(\frac{\exp(-E(Y_1,x))}{z^*(x)})=1$ and the iid property of $Y_1,\dots,Y_N$ imply
	\begin{align*}
		&\mathbb{E}\Big \vert   \frac{\widehat{Z}^{(N)}(x)}{z^*(x)}-1  \Big\vert^2
		= \frac{1}{N} 	\,\mathbb{E}\Big \vert   \frac{\exp(-E(Y_1,x))}{z^*(x)}-1  \Big\vert^2\\
		=& \frac{1}{N} \left( \frac{\mathbb{E}[\exp(-2E(Y_1,x))]}{z^*(x)^2}-1 \right) 
		\leq \frac{1}{N} \; \frac{u(x)^2}{z^*(x)^2},
	\end{align*}
	where the last inequality follows by $\exp(-E(Y_1,x))\leq u(x)$. Therefore,
	$	\mathbb{E}\Big \vert \frac{z^*(x)}{\widehat{Z}^{(N)}(x)}-1 \Big\vert^2 \leq \frac{1}{N} \frac{u(x)^2}{\ell(x)^2}$, such that, by \eqref{al: almost_done}, \eqref{al: really_almost_done} and taking an infimum over $x_0\in\mathcal{X}$, the assertion is proven.
\end{proof}

\begin{proof}[Proof of Lemma \ref{lem:cost_doubly_intractable}]
	Using \eqref{equ:m(n)_estimate} yields $N(t_{m(i)}) \leq \left\lceil (4i)^{\frac{1+\alpha}{2}} \right\rceil \leq (8i)^{\frac{1+\alpha}{2}}$. Hence estimating the sum by its largest summand
	\[
	\sum_{i=1}^{n} N(t_{m(i)}) \leq 8^{\frac{1+\alpha}{2}} \sum_{i=1}^n i^{\frac{1+\alpha}{2}} \leq 8^{\frac{1+\alpha}{2}} n^{\frac{3+\alpha}{2}}.
	\]
	Similarly, for the lower bound we use again \eqref{equ:m(n)_estimate} and obtain
	\[
	\sum_{i=1}^{n} N(t_{m(i)}) \geq \frac{1}{2^{\frac{1+\alpha}{2}}} \sum_{i=1}^n i^{\frac{1+\alpha}{2}} 
	\geq 
	\frac{1}{2^{\frac{1+\alpha}{2}}} \sum_{i=\lfloor n/2 \rfloor}^n i^{\frac{1+\alpha}{2}}
	\geq 
	\frac{1}{2^{\frac{3+\alpha}{2}}} n^{\frac{3+\alpha}{2}}.\qedhere
	\]
\end{proof}

\end{appendix}

\newcommand{\etalchar}[1]{$^{#1}$}
\providecommand{\bysame}{\leavevmode\hbox to3em{\hrulefill}\thinspace}
\providecommand{\MR}{\relax\ifhmode\unskip\space\fi MR }
\providecommand{\MRhref}[2]{%
	\href{http://www.ams.org/mathscinet-getitem?mr=#1}{#2}
}
\providecommand{\href}[2]{#2}

\end{document}